\newtheorem{theorem}{Theorem}[section]
\newtheorem{lemma}[theorem]{Lemma}
\newtheorem{proposition}[theorem]{Proposition}
\newcommand{\R}{\mathbb{R}}
\newcommand{\bR}{\mathbb{R}}
\newcommand{\bZ}{\mathbb{Z}}
\newcommand{\bN}{\mathbb{N}}
\newcommand{\bT}{\mathbb{T}}
\newcommand{\beq}{\begin{equation}}
\newcommand{\eeq}{\end{equation}}
\newcommand{\beqq}{\begin{equation*}}
\newcommand{\eeqq}{\end{equation*}}
\newcommand{\w}{\widetilde}
\theoremstyle{definition}
\theoremstyle{remark}
\newtheorem{remark}[theorem]{Remark}
\numberwithin{equation}{section}
\def \l {\left}
\def \r {\right}
\def \lea {\lesssim}
\def \ca { \mathbf{1}}
\numberwithin{equation}{section}
\begin{document}

\address{Yangkendi Deng
\newline \indent Department of Mathematics and Statistics, Beijing Institute of Technology.
\newline \indent Key Laboratory of Algebraic Lie Theory and Analysis of Ministry of Education.
\newline \indent  Beijing, China. \indent}
\email{dengyangkendi@bit.edu.cn}

\address{Chenjie Fan
\newline \indent  State Key Laboratory of Mathematical Sciences, Academy of Mathematics and Systems Science, Chinese Academy of Sciences
\newline \indent Beijing, China.
\indent }
\email{fancj@amss.ac.cn}

\address{Zehua Zhao
\newline \indent Department of Mathematics and Statistics, Beijing Institute of Technology.
\newline \indent Key Laboratory of Algebraic Lie Theory and Analysis of Ministry of Education.
\newline \indent  Beijing, China. \indent}
\email{zzh@bit.edu.cn}

\title[Sharp Strichartz estimate for Hyperbolic Schr\"odinger equation on $\mathbb{R}\times \mathbb{T}$]{Sharp $L^4$ Strichartz estimate for Hyperbolic Schr\"odinger equation on $\mathbb{R}\times \mathbb{T}$}
\author{Yangkendi Deng, Chenjie Fan and Zehua Zhao}

\subjclass[2020]{Primary: 35Q55; Secondary: 35R01, 37K06, 37L50}

\maketitle

\begin{abstract}

We prove the \textit{sharp} $L^4$ Strichartz estimate without derivative loss for the hyperbolic Schr\"odinger equation on $\mathbb{R}\times\mathbb{T}$,
\begin{equation}
\|e^{it (\partial_{x_1}^2-\partial_{x_2}^2)} \phi\|_{L^4_{t,x_1,x_2}([0,1]\times \bR\times \bT)}\lea \|\phi\|_{L_{x_1,x_2}^2(\bR\times \bT)},   
\end{equation}
  which serves as the hyperbolic analogue of the classical result of Takaoka–Tzvetkov \cite{takaoka20012d}. The proof is based on the combination of a robust kernel decomposition method with precise measure estimates for semi-algebraic sets. As an immediate application, we establish the global well-posedness for the cubic hyperbolic Schr\"odinger equation on $\mathbb{R}\times\mathbb{T}$ in the $L^2$-critical space with sufficiently small initial data.

\end{abstract}

\noindent \textbf{Keywords}: Hyperbolic Schr\"odinger equation, NLS, Strichartz estimate, waveguide manifold, well-posedness, semi-algebraic set, kernel decomposition.
\bigskip

\setcounter{tocdepth}{1}
\tableofcontents

\parindent = 10pt     
\parskip = 8pt

\section{Introduction}

\subsection{Background and Motivations}

In this article, we consider  Strichartz estimates for linear hyperbolic Schr\"odinger equation:
\begin{equation}\label{eq: linear eqution}
\begin{cases}
i \partial_t u+\square u=0,\\
u(0,x)=\phi(x)\in L^2(\bR\times \bT).	
\end{cases}
\end{equation}
where $\square$ denotes $\partial_{x_1}^2-\partial_{x_2}^2$ and $x=(x_1,x_2)$. We denote the solution to linear equation \eqref{eq: linear eqution} by $e^{it\square}\phi$ or $H(t)\phi$.

The theory of $L^4$ Strichartz estimates for  (elliptic) Schr\"odinger equation is now relatively complete on the two-dimensional spaces $\mathbb{R}^2$, $\mathbb{R}^2 \times \mathbb{T}$, and $\mathbb{T}^2$. Without distinguishing the underlying spaces, we denote the solution to the linear Schr\"odinger equation with initial data $\phi \in L^2$ by $e^{it\Delta}\phi$,
\begin{equation}
\begin{cases}
i\partial_t u+\Delta u=0,\\
u(0,x)=\phi(x).
\end{cases}
\end{equation}

Strichartz estimates on $\mathbb{R}^2$,
$$  \|e^{it\Delta} \phi\|_{L^4_{t,x}(\bR \times \mathbb{R}^2)} \lesssim \|\phi\|_{L_x^2(\mathbb{R}^2)}  $$
 follow from classical $TT^*$ method and the dispersive estimate, see \cite{cazenave2003semilinear,keel1998endpoint}. Strichartz estimates on $\mathbb{T}^2$, 
 $$  \|e^{it\Delta} P_{\le N} \phi\|_{L^4_{t,x}([0,1] \times \mathbb{T}^2)} \lesssim_\varepsilon N^\varepsilon \|\phi\|_{L_x^2(\mathbb{T}^2)}, \forall \varepsilon>0  $$
were proved in \cite{Bourgain1} via  techniques from analytic number theory, and also follow from  $\ell^2-$decoupling theory, \cite{BD}. In a notable recent development, \cite{Herr},  the following sharp estimate 
$$  \|e^{it\Delta} P_{\le N} \phi\|_{L^4_{t,x}([0,1] \times \mathbb{T}^2)} \lesssim (\log N)^{\frac14} \|\phi\|_{L_x^2(\mathbb{T}^2)}  $$
was obtained by employing methods from incidence geometry. On $\mathbb{R} \times \mathbb{T}$, Takaoka and Tzvetkov established the estimate 
$$  \|e^{it\Delta} \phi\|_{L^4_{t,x}([0,1] \times \mathbb{R} \times \mathbb{T})} \lesssim \|\phi\|_{L_x^2(\mathbb{R}\times \mathbb{T})}  $$
by analyzing the measure of annuli on $\mathbb{R} \times \mathbb{Z}$, \cite{takaoka20012d}. It is also noteworthy that there exists a class of global-in-time Strichartz estimates on $\mathbb{R} \times \mathbb{T}$ or on higher-dimensional waveguides, see \cite{Barron} and \cite{BCP21}.

Let us turn to two-dimensional hyperbolic Schr\"odinger equation.  Same $L^{4} $ Strichartz estimates as the elliptic case on $\mathbb{R}^2$ holds by again applying  $TT^*$ method and the dispersive estimate. Thus, we focus on the cases of $\mathbb{T}^2$ and $\mathbb{R} \times \mathbb{T}$. Wang \cite{wang2013periodic} established the following estimate for $\mathbb{T}^2$: 
$$ \|e^{it\square} P_{\le N} \phi\|_{L^4_{t,x}([0,1] \times \mathbb{T}^2)} \lesssim N^{\frac14} \|\phi\|_{L_x^2(\mathbb{T}^2)}.$$ 
It is worth noting that, in $\bT^{2}$ case, according to recent work \cite{liu2025}, or more precisely as shown in \cite{BD3}, $L^{6}$ estimates also play a significant role. On the space $\bR\times \bT$ for the hyperbolic Schr\"odinger equation, we need to distinguish between the linear evolution operators $e^{it\square}$ and $e^{it\partial_{x_1} \partial_{x_2}}$. Recent works \cite{bacsakouglu2024local,BSTW25} unify results in both directions, establishing estimates 
\begin{equation}\label{eq:hyperbolic with loss}
\|e^{it\square} P_{\le N} \phi\|_{L^4_{t,x}([0,1] \times \bR\times \mathbb{T})} \lesssim (\log N)^{\frac14} \|\phi\|_{L_x^2(\bR\times \bT)},
\end{equation}
and 
$$\|e^{it\partial_{x_1} \partial_{x_2}} P_{\le N} \phi\|_{L^4_{t,x}([0,1] \times \bR\times \mathbb{T})} \lesssim N^{\frac14} \|\phi\|_{L_x^2(\bR\times \mathbb{T})}.$$
See also the recent work \cite{demeter2025restriction} for restriction and decoupling estimates for the hyperbolic paraboloid.

The main result of this paper (Theorem \ref{thm:hyperbolic on RT}) removes the $(\log N)^{\frac14}$ loss in estimate \eqref{eq:hyperbolic with loss}. Based on this critical $L^4$ Strichartz estimate, we naturally obtain local well-posedness for the cubic hyperbolic nonlinear Schr\"odinger equation

\begin{equation}\label{eq: main}
\begin{cases}
i \partial_t u+\square u=\pm |u|^2 u,\\
u(0,x)=\phi(x)\in L_x^2(\bR\times \bT),
\end{cases}
\end{equation}
with arbitrary $L^2$ initial data, as well as global well-posedness for small ${L}^2$ initial data. See Theorem \ref{mainthm: 2}.

The hyperbolic nonlinear Schr\"odinger equation (HNLS) serves as a fundamental model in multiple physical contexts, such as nonlinear optics with anisotropic dispersion and the theory of water waves. See the recent survey \cite{saut2024hyperbolic} and the references therein. It also emerges in kinetic theory, with HNLS-type dynamics being observed in high-frequency limits of the Boltzmann equation. A closely related and physically important extension is the (hyperbolic-elliptic) Davey-Stewartson (DS) system originated in \cite{davey1974three}, a nonlocal model which reduces to HNLS in certain regimes (see \cite{rosenzweig2018global} and the references therein). Therefore, a thorough understanding of HNLS provides a crucial stepping stone for analyzing the more complex dynamics of the DS system. We also refer to recent work \cite{Herrnew} for KP-II equations on the cylinder, which also involves \eqref{eq: linear eqution}. It would also be of great interest to pursue large-data global well-posedness for cubic hyperbolic nonlinear Schr\"odinger equation
\eqref{eq: main}.

\subsection{Statement of main results}
We are now ready to present the main results in this paper.
First, we show the {sharp} $L^4$-Strichartz estimate for hyperbolic Schr\"odinger equation on the cylindrical domain as follows,
\begin{theorem}[Sharp $L^4$-Strichartz estimate]\label{thm:hyperbolic on RT}
The estimate
$$ \|e^{it \square} \phi\|_{L^4_{t,x}([0,1]\times \bR\times \bT)}\lea \|\phi\|_{L_x^2(\bR\times \bT)}$$
holds for $\phi\in L_x^2(\bR\times \bT)$.
\end{theorem}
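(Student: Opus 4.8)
The plan is to reduce the estimate to a bilinear/counting problem via the standard duality and then control the relevant measure of a semi-algebraic set. First I would use a Littlewood--Paley decomposition in the $x_1$ (continuous) variable: write $\phi = \sum_{N} P_N \phi$ where $P_N$ localizes $\xi_1 \sim N$ a dyadic number, and also decompose in the $x_2 = \mathbb{T}$ frequency $n \in \mathbb{Z}$ into dyadic blocks $|n| \sim M$. By orthogonality in $L^2$ and almost-orthogonality considerations for $L^4 = (L^2 \cdot L^2)$, it suffices to prove a bilinear estimate of the form
\begin{equation}
\|(e^{it\square} P_{N_1} \phi_1)(e^{it\square} P_{N_2} \phi_2)\|_{L^2_{t,x}([0,1]\times \bR \times \bT)} \lea \|\phi_1\|_{L^2}\|\phi_2\|_{L^2}
\end{equation}
with a gain when $N_1 \gg N_2$, and to handle the diagonal term by summing dyadic scales; the crucial point is that the hyperbolic symbol $\square$ with opposite signs makes the resonance set different from the elliptic case, so the $(\log N)^{1/4}$ loss present in \eqref{eq:hyperbolic with loss} must be removed by being more careful.

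The heart of the matter is an $L^4$ counting/measure estimate. Squaring $\|e^{it\square}\phi\|_{L^4}^4 = \|(e^{it\square}\phi)^2\|_{L^2}^2$ and expanding via Plancherel in $t$, $x_1$, $x_2$, I would reduce to bounding, uniformly, the measure (in the $\bR$-directions) times cardinality (in the $\bZ$-direction) of the set of frequency quadruples $(\xi_1,\eta_1,n_1),(\xi_2,\eta_2,n_2)$ satisfying the two resonance relations coming from conservation of the (hyperbolic) spacetime frequency:
\begin{equation}
\xi_1 + \xi_2 = \text{const}, \quad n_1 + n_2 = \text{const}, \quad \xi_1^2 - n_1^2 + \xi_2^2 - n_2^2 = \text{const}.
\end{equation}
After eliminating variables, the constraint becomes a single equation whose solution set, at a fixed value of the parameters and with $\xi_i$ constrained to intervals, is a \emph{semi-algebraic set} of bounded degree in the continuous variables with the integer $n_i$ as a lattice parameter. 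The plan is to invoke a precise measure estimate for such semi-algebraic sets (the ``robust kernel decomposition'' referenced in the abstract): decompose the Schr\"odinger kernel $H(t)(x)$ dyadically according to the size of $t$ and the location of $x$, and for each piece bound the contribution by combining a trivial $L^\infty$ bound on the kernel with a measure bound on the set where it is non-negligible. The hyperbolic geometry means the level sets are hyperbolas rather than circles, so the annulus-counting of Takaoka--Tzvetkov is replaced by counting lattice points near hyperbolic arcs of controlled length, together with a transversality estimate that produces the saving.

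Concretely I would: (i) set up the kernel $H(t)(x_1,x_2) = \frac{1}{\sqrt{t}} e^{i x_1^2/4t}\,\theta\!\left(\frac{-x_2}{t}, \frac{1}{t}\right)$ type formula where $\theta$ is a Jacobi theta function on $\bT$, and split $t \in [0,1]$ into dyadic ranges $t \sim 2^{-j}$; (ii) on each range use the theta-function bound $|\theta(\tfrac{x_2}{t},\tfrac1t)| \lea t^{-1/2}\sum_{m} e^{-c(x_2 - m t\cdot(\cdots))^2 \cdot t^{-1}}$ type Gaussian-sum estimate to reduce to a genuinely two-dimensional Euclidean kernel on $\bR^2$ with a lattice of translates; (iii) apply $TT^*$ to write $\|e^{it\square}\phi\|_{L^4}^2 \lea \|\,|H * \bar H|\,\|$ acting appropriately, so the problem becomes an $L^{4/3} \to L^4$ bound for convolution with $\sum_j \sum_m K_{j,m}$; (iv) bound the operator norm by summing a geometric series in $j$, where the key gain comes from the measure of the set $\{(t,x): |H_j(t,x)| \text{ large}\}$ being small — this is exactly where the semi-algebraic/Tarski--Seidenberg uniform bound on the number of connected components and their measure enters, giving a bound independent of the number of lattice translates that contribute. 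The main obstacle I anticipate is step (iv): controlling the overlap of the $m$-translated pieces (the periodicity) uniformly in the dyadic time scale $j$ without losing the logarithm — this requires showing that the hyperbolic level sets, unlike circles, do not concentrate lattice points, and the semi-algebraic measure estimate must be quantitatively sharp (linear rather than logarithmic in the relevant parameter). A secondary technical point is summing the Littlewood--Paley pieces in $N$ and $M$ at the very end, which should follow from the bilinear gain once the diagonal block is handled, but requires care because the $\bR$-direction is non-compact so one cannot simply bound the number of dyadic scales.
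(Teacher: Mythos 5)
Your proposal correctly identifies that the result should follow from a quadrilinear/resonance-set measure estimate and that a semi-algebraic measure bound for sets on $\bR\times\bZ$ is a key ingredient, but there is a genuine gap at the decisive step, and the overall architecture differs from the paper's.

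The gap is precisely the one you flag as your ``main obstacle.'' A Littlewood--Paley decomposition into dyadic blocks $N$ with a bilinear gain for $N_1\gg N_2$ leaves the diagonal blocks $N_1\sim N_2$, and summing these over dyadic scales is exactly what produces the $(\log N)^{1/4}$ loss of \eqref{eq:hyperbolic with loss}. Likewise, your plan (i)--(iv) of writing the kernel via a theta function, dyadically decomposing $t\sim 2^{-j}$, and summing operator norms over $j$ faces the same issue: there are $\sim \log N$ scales $j$, and without a concrete mechanism for extracting geometric decay in $j$ (which you do not supply), this also yields a logarithm. Saying that ``the semi-algebraic/Tarski--Seidenberg uniform bound enters'' does not by itself remove the loss, because that lemma controls one scale at a time; the loss comes from summing scales, not from any single one.

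The paper avoids this entirely. It never performs a Littlewood--Paley or dyadic-in-time decomposition. Instead, after Plancherel and the change of variables $u=\frac12(\xi+\gamma)$, $v=\frac12(\xi-\gamma)$, $w=\frac12(\eta-h)$, the quadrilinear form reduces to \eqref{eq:4-linear for hyperbolic RT}, and the decisive move is an \emph{algebraic} decomposition of the constraint set $\{|H(v)-H(w)|\le 1\}$ into
\begin{align}
A_1 &= \{\,|H(v)-H(w)|\le 1,\ |H(v,w)|^2 \le 100\,|H(v)H(w)|\,\},\\
A_2 &= \{\,|H(v)-H(w)|\le 1,\ |H(v,w)|^2 > 100\,|H(v)H(w)|\,\}.
\end{align}
On $A_1$ one applies AM--GM to pair $f(u+v)f(u-v)$ and integrates in $w$ for fixed $v$; on $A_2$ one instead pairs $f(u+v)f(u+w)$ and integrates in $u$ for fixed $v$. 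Each of these two directions of integration produces a bounded (not logarithmic) measure, which is verified by Lemma~\ref{lem:measure control for RZ} reducing the $\bR\times\bZ$-measure to a Euclidean area estimate plus a longest-cross-section estimate, both $O(1)$ after a change of variables to the ``light-cone'' coordinates $\alpha=\frac{w_1+w_2}{v_1+v_2}$, $\beta=\frac{w_1-w_2}{v_1-v_2}$. This $A_1/A_2$ split, choosing \emph{which} variable to integrate based on which pairing of the quadratic forms is favorable, is the ``kernel decomposition'' referred to in the abstract, and it is the missing idea in your plan. There is also a secondary technical refinement in the paper (sub-dividing $A_2$ by the signs of $v_1,v_2,H(v)$) needed to make the longest-cross-section estimate true, which your outline does not anticipate.
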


\begin{remark}
The above estimate is local-in-time. It is expected to obtain a global-in-time Strichartz estimate motivated by \cite{BCP21} as follows, 
\begin{equation}
 \|e^{it \square} \phi\|_{\ell^8_\gamma(\bZ) L^4_{t,x}([\gamma,\gamma+1]\times \bR\times \bT)}\lea \|\phi\|_{L_x^2(\bR\times \bT)}.
\end{equation}
See \cite{BCP21} for the global spacetime norm $\ell^8_\gamma(\bZ) L^4_{t,x}$. Such global-in-time refinements are expected and would be instrumental for small-data scattering in other critical models, such as the 2D quintic NLS; see \cite{BSTW25}.

\end{remark}

Next, we have well-posedness theory for \eqref{eq: main},
\begin{theorem}[Well-posedness theory]\label{mainthm: 2} The Cauchy problem
\eqref{eq: main} is local well-posed for arbitrary initial data $\phi\in L^2(\bR\times\bT)$ and globally well-posed for sufficiently small data $\phi\in L^2(\bR\times\bT)$.
\end{theorem}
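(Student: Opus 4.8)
\medskip
\noindent\emph{Proof strategy for Theorem \ref{mainthm: 2}.}
The plan is to run a contraction–mapping argument in a Strichartz–type space built directly on the sharp estimate of Theorem \ref{thm:hyperbolic on RT}, being careful at every step not to invoke a lifespan that depends only on $\|\phi\|_{L^2}$, since \eqref{eq: main} is $L^2$–critical.

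First I would record the two companion estimates to Theorem \ref{thm:hyperbolic on RT}. By duality one obtains $\|\int_{\bR}e^{-is\square}F(s)\,ds\|_{L^2_x(\bR\times\bT)}\lesssim\|F\|_{L^{4/3}_{t,x}([0,1]\times\bR\times\bT)}$, and since the temporal Lebesgue exponents $4/3$ and $4$ are off-diagonal, the Christ--Kiselev lemma upgrades the composition with $e^{it\square}$ to the retarded estimate
\[
\Big\|\int_0^t e^{i(t-s)\square}F(s)\,ds\Big\|_{C_t([0,T];L^2_x)\,\cap\,L^4_{t,x}([0,T]\times\bR\times\bT)}\lesssim\|F\|_{L^{4/3}_{t,x}([0,T]\times\bR\times\bT)},
\]
uniformly in $T\le1$. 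Combined with the Hölder identity $\||u|^2u\|_{L^{4/3}_{t,x}}=\|u\|_{L^4_{t,x}}^3$ and its difference analogue, these show that the Duhamel map $\Phi(u)=e^{it\square}\phi\mp i\int_0^t e^{i(t-s)\square}(|u|^2u)(s)\,ds$ is well controlled on the space $Z_T:=C_t([0,T];L^2_x)\cap L^4_{t,x}([0,T]\times\bR\times\bT)$.

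For local well-posedness with arbitrary $\phi\in L^2(\bR\times\bT)$, the crucial observation is that, by Theorem \ref{thm:hyperbolic on RT}, $e^{it\square}\phi\in L^4_{t,x}([0,1]\times\bR\times\bT)$ for the given $\phi$, so dominated convergence yields $\|e^{it\square}\phi\|_{L^4_{t,x}([0,T]\times\bR\times\bT)}\to0$ as $T\to0$. Choosing $T$ small enough that this quantity lies below a universal threshold, $\Phi$ maps a suitable ball of $Z_T$ into itself and contracts it in the $L^4_{t,x}$ metric; the fixed point is the desired solution, and uniqueness in $Z_T$ together with continuous dependence on the data follow by the usual perturbative arguments. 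For small-data global well-posedness, if $\|\phi\|_{L^2}=\delta$ is small then Theorem \ref{thm:hyperbolic on RT} gives $\|e^{it\square}\phi\|_{L^4_{t,x}([0,1]\times\bR\times\bT)}\lesssim\delta$, so the contraction runs on the whole unit interval $[0,1]$ and produces a solution with $\sup_{t\in[0,1]}\|u(t)\|_{L^2}\lesssim\delta$. Since this solution is a limit of smooth ones, mass is conserved, hence $\|u(1)\|_{L^2}=\delta$; by time-translation invariance the same argument extends the solution to $[1,2]$, and iterating yields a global solution.

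I expect the only real obstacle to be bookkeeping the critical scaling: once Theorem \ref{thm:hyperbolic on RT} is in hand there is no single hard estimate, but one must use the dominated-convergence step for large data and mass conservation together with the unit-interval form of Theorem \ref{thm:hyperbolic on RT} for small data, rather than any estimate with a lifespan controlled solely by the $L^2$ norm. If unconditional uniqueness or scattering were also sought one would replace the bare $L^4_{t,x}$ space by a Besov refinement or the $U^2$--$V^2$ function spaces and, for scattering, invoke the global-in-time Strichartz estimate anticipated in the remark after Theorem \ref{thm:hyperbolic on RT}; neither refinement is needed for the statement as given.
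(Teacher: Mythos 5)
Your proposal is correct and follows essentially the same approach as the paper: a contraction/Picard argument in $C_tL^2_x\cap L^4_{t,x}$, using the sharp Strichartz estimate of Theorem \ref{thm:hyperbolic on RT}, the $T$-$T^*$ plus Christ--Kiselev inhomogeneous estimate, Hölder on the cubic nonlinearity, and mass conservation to iterate to a global solution. The paper's Section \ref{4} only sketches the small-data case explicitly (Picard iteration on $[-1,1]$ with $\|\phi\|_{L^2}$ small) and defers the large-data local theory to Takaoka--Tzvetkov; you make the large-data step explicit via the standard $L^2$-critical device that $\|e^{it\square}\phi\|_{L^4_{t,x}([0,T])}\to 0$ as $T\to 0$ by dominated convergence, which is exactly the ingredient the paper is implicitly relying on.
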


\begin{remark}
Theorem \ref{mainthm: 2} follows from the sharp Strichartz estimate in Theorem \ref{thm:hyperbolic on RT} in a standard way. See \cite{takaoka20012d} for the elliptic case.   
\end{remark}
Moreover, we establish sharp bilinear Strichartz estimates in the hyperbolic setting. Sharp bilinear estimates of this type are crucial in applications to frequency-localized nonlinear interactions and are of independent interest. 
\begin{theorem}[Bilinear Strichartz estimates]\label{mainthm: 3}
Let $N_1\gg N_2 \ge 1, \lambda\ge 1$, $N_1, N_2 \in \bN$ and $\bT_\lambda=\bR/(\lambda\bZ)$. Assume that $\phi_1, \phi_2\in L^2(\bR\times \bT_\lambda)$. Then there holds
$$ \|e^{it \square} P_{N_1} \phi_1 \cdot e^{it \square} P_{N_2} \phi_2 \|_{L^2_{t,x}([0,1]\times \bR\times \bT_\lambda)}\lea \l(\frac{1}{\lambda}+\frac{N_2}{N_1}\r)^{\frac12} \|\phi_1\|_{L_x^2}  \|\phi_2\|_{L_x^2}.$$
\end{theorem}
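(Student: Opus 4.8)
The plan is to follow the classical Bourgain-style bilinear argument, reducing the $L^2$ norm of the product to a counting/measure estimate on the set of frequency pairs that produce near-resonant space-time frequencies, and then exploiting the transversality between the two dyadic pieces. Write $e^{it\square}P_{N_j}\phi_j$ via Fourier series in the periodic variable $x_2\in\bT_\lambda$ and Fourier transform in $x_1\in\bR$; the product $e^{it\square}P_{N_1}\phi_1\cdot e^{it\square}P_{N_2}\phi_2$ then has space-time frequency $(\tau,\xi_1,\xi_2)$ with $\xi_1=\eta_1+\zeta_1\in\bR$, $\xi_2=\eta_2+\zeta_2\in\tfrac1\lambda\bZ$, and $\tau = (\eta_1^2-\eta_2^2)+(\zeta_1^2-\zeta_2^2)$. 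By Plancherel in $(t,x)$ and Cauchy--Schwarz, the square of the left side is controlled by
\begin{equation}
\sup_{\tau,\xi_1,\xi_2}\Big|\big\{(\eta,\zeta): \eta\in\mathrm{supp}\,\widehat{P_{N_1}\phi_1},\ \zeta\in\mathrm{supp}\,\widehat{P_{N_2}\phi_2},\ \eta+\zeta=(\xi_1,\xi_2),\ |\eta|_\square^2+|\zeta|_\square^2 = \tau\big\}\Big|\cdot\|\phi_1\|_{L^2}^2\|\phi_2\|_{L^2}^2,
\end{equation}
where $|v|_\square^2 := v^{(1)2}-v^{(2)2}$ and the measure is the product of Lebesgue measure on the $\bR$-components with counting measure on the $\tfrac1\lambda\bZ$-components.

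The key step is to estimate that measure. After using the constraint $\eta+\zeta=(\xi_1,\xi_2)$ to eliminate $\zeta$, one is left with a one-(continuous-)parameter plus one-(discrete-)parameter family in $\eta=(\eta_1,\eta_2)$ subject to the single quadratic equation $|\eta|_\square^2+|\xi-\eta|_\square^2=\tau$, i.e. a conic section in the $\eta_1$-variable for each admissible value of the integer $\lambda\eta_2$. The gradient of the constraint in the continuous variable $\eta_1$ is (up to constants) $\eta_1-(\xi_1-\eta_1)=2\eta_1-\xi_1$, which has size $\sim N_1$ on the support of $P_{N_1}\phi_1$ since $N_1\gg N_2$ forces $|\xi_1|\lesssim N_1$ and $|\eta_1|\sim N_1$ cannot cancel; hence for each fixed integer value of $\lambda\eta_2$ the set of admissible $\eta_1$ has Lebesgue measure $\lesssim 1/N_1$. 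It then remains to count the number of admissible integers $\lambda\eta_2$: these must satisfy $|\eta_2|\lesssim N_1$ and $|\eta_2-\text{(something of size }\lesssim N_2)|\lesssim N_2$, giving at most $\lesssim \lambda N_2+1$ of them. Multiplying, the measure is $\lesssim (\lambda N_2+1)/N_1 \lesssim \lambda N_1\big(\tfrac1\lambda+\tfrac{N_2}{N_1}\big)$, and taking square roots yields the claimed bound $\big(\tfrac1\lambda+\tfrac{N_2}{N_1}\big)^{1/2}$ after dividing by the trivial $\lambda^{1/2}$ lost in passing from counting measure on $\tfrac1\lambda\bZ$ to the normalization of $\|\phi_2\|_{L^2(\bR\times\bT_\lambda)}$.

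The main obstacle I anticipate is the bookkeeping in the hyperbolic (indefinite) quadratic form: unlike the elliptic case, the level sets $\{|\eta|_\square^2+|\xi-\eta|_\square^2=\tau\}$ are hyperbolas rather than circles, so one cannot simply invoke "measure of an annulus" estimates, and one must be careful that the transversality $|2\eta_1-\xi_1|\sim N_1$ is genuinely not destroyed by the interplay with the $\eta_2$-direction — in particular that a hyperbola branch does not run nearly tangent to the $\eta_1$-axis over a long range. The resolution is that transversality is needed only in the single continuous variable $\eta_1$ after the discrete variable $\eta_2$ is frozen, and there the one-variable quadratic $P(\eta_1)=\tau$ has at most two roots with the coefficient of $\eta_1$ of size $N_1$ near each root, so a $\delta$-neighborhood of the solution set has measure $\lesssim \delta/N_1$ uniformly; summing the $O(\lambda N_2+1)$ contributions from the frozen variable is then immediate. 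I would also remark that taking $\lambda\to\infty$ recovers (the bilinear refinement on) $\bR^2$ with the expected gain $(N_2/N_1)^{1/2}$, and $\lambda=1$ recovers the $\bR\times\bT$ statement needed for the well-posedness theory, which serves as a useful consistency check.
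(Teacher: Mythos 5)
Your overall strategy coincides with the paper's: by Plancherel and Cauchy--Schwarz reduce the bilinear $L^2$ bound to a pointwise-in-$(\tau,\xi)$ measure estimate on the resonant set, then slice in the continuous $\eta_1$ coordinate and account for the discrete $\eta_2$ coordinate. The paper parametrizes by the low-frequency variable ($|\eta|\sim N_2$) and feeds the resulting set $E_{a,b}$ into Lemma~\ref{lem:measure control for RZ}; you parametrize by the high-frequency variable and count the discrete slices by hand. These are equivalent \emph{provided} the per-$\eta_1$-slice measure is genuinely $\lesssim 1/N_1$, which is the point I would push back on. Two small slips first: the resonant set should be the \emph{thickened} level set $\{\eta : |H(\eta)+H(\xi-\eta)+\tau|\lesssim 1\}$ produced by the time cutoff, not the measure-zero equality you wrote; and the step $(\lambda N_2+1)/N_1\lesssim\lambda N_1(\frac1\lambda+\frac{N_2}{N_1})$ carries an extraneous $N_1$ --- it should read $\lesssim\lambda(\frac1\lambda+\frac{N_2}{N_1})$, which is what you actually need after dividing out the normalization factor $\lambda$.

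The substantive gap is precisely the transversality concern you raised and then waved away. Your assertion that $|2\eta_1-\xi_1|\sim N_1$ ``since $|\eta_1|\sim N_1$ cannot cancel'' silently assumes $|\eta_1|\sim N_1$; Littlewood--Paley localization only gives $|\eta|\sim N_1$, and $\eta$ may well have $|\eta_1|\lesssim N_2$ with $|\eta_2|\sim N_1$. In that regime the $\eta_1$-derivative of the resonance function is only $O(N_2)$, the per-slice measure is only $\lesssim 1$ (it is the $O(1)$-neighborhood of a level set of a \emph{monic} quadratic in $\eta_1$), and your product (number of slices)$\times$(per-slice bound) degrades to $(\lambda N_2+1)\cdot 1$, hence $N_2+\frac1\lambda$ after normalization --- far weaker than claimed. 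This is exactly the work that Lemma~\ref{lem:measure control for RZ} does that a naive slice-count cannot replicate: it bounds $|E|_{\bR\times\bZ_{1/\lambda}}$ by the Euclidean area plus $\frac1\lambda$ times the longest $\eta_1$-segment, and those two quantities may be estimated by \emph{different} slicings. The Euclidean area can be bounded by slicing in $\eta_2$ (where the derivative is $\sim N_1$ in this regime), still giving $\lesssim N_2/N_1$; the longest $\eta_1$-segment is $\lesssim 1$ unconditionally by the monic-quadratic bound. (The paper's ``WLOG $|a_1|\sim N_1$'' elides the same case, but the fix is painless within its framework; it is not painless within a direct slice-count.) Without Lemma~\ref{lem:measure control for RZ} or an equivalent substitute, your argument as written does not close in the degenerate regime.
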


\begin{remark}
This estimate recovers the elliptic bilinear eigenfunction bound in \cite{MR4782142}. Moreover, by comparing with the corresponding linear estimate, we see that the dependence on the frequency scales is optimal; thus the bilinear estimate is sharp.
\end{remark}

\subsection{Organization of the rest of this paper}
The remainder of this paper is structured as follows. Section \ref{3} is devoted to the proof of the sharp $L^4$ Strichartz estimate (Theorem \ref{thm:hyperbolic on RT}), which forms the core of our work; here, we detail the kernel decomposition method and the pivotal measure-theoretic arguments\footnote{The kernel decomposition method utilized here is robust and likely applicable to other multilinear estimates. See, for instance, the recent work \cite{DZZ}, where it is used in the study of the cubic NLS on the product space $\mathbb{R}\times\mathbb{S}^3$.}. The application of this linear estimate to establish the $L^2$-critical well-posedness theory for the cubic hyperbolic NLS (Theorem \ref{mainthm: 2}) is presented in Section \ref{4}, where we prove both local well-posedness and small-data global well-posedness. Following this, Section \ref{5} contains the proof of the sharp bilinear Strichartz estimate (Theorem \ref{mainthm: 3}), a result of independent interest that is crucial for analyzing nonlinear interactions in frequency-localized contexts. Finally, in Section \ref{6}, we synthesize our results, discuss their broader implications in the context of dispersive PDEs on waveguide manifolds, and outline a series of open problems for future research.

\subsection{Notations}
Throughout this note, we use $C$ to denote  the universal constant and $C$ may change line by line. We say $A\lesssim B$, if $A\leq CB$. We say $A\sim B$ if $A\lesssim B$ and $B\lesssim A$. We also use notation $C_{B}$ to denote a constant that depends on $B$. We use usual $L^{p}$ spaces and Sobolev spaces $H^{s}$. 
\subsection*{Acknowledgments}
The authors appreciate Prof. Nicolas Burq and Prof. Yuzhao Wang for helpful discussions. Y. Deng was supported by China Postdoctoral Science Foundation (Grant No. 2025M774191) and the NSF grant of China (No. 12501117). C. Fan was partially supported by the National Key R\&D Program of China, 2021YFA1000800, CAS Project for Young Scientists in Basic Research, Grant No.YSBR-031, and NSFC Grant (Nos. 12288201 \& 12471232). Z. Zhao was supported by the NSF grant of China (No. 12101046, 12426205, 12271032) and the Beijing Institute of Technology Research Fund Program for Young Scholars.

\section{Sharp Strichartz estimate: Proof of Theorem \ref{thm:hyperbolic on RT}}\label{3}

In this section, we give the proof of Theorem \ref{thm:hyperbolic on RT}. In our analysis, we often need to consider both the Euclidean measure on $\mathbb{R}^2$ and the product measure on $\mathbb{R} \times \mathbb{Z}$ for a set. To prevent confusion, we introduce the following notation to distinguish them.

For $\lambda\ge 1$ and a set $E\subset \bR^2$, we define $|E|_{\bR^2}$ is the measure of $E$ on $\bR^2$, and
$$ |E|_{\bR \times \bZ_{1/\lambda}}= \frac{1}{\lambda}\sum_{\xi_2\in \bZ_{1/\lambda}} \int_\bR  \ca_{E}(\xi_1, \xi_2) {\rm d}\xi_1. $$
We similarly define the measure $|\cdot|_{\bR}.$ Denote
$$ \int_{\bR\times \bZ_{1/\lambda}} F(\xi) {\rm d} \xi := \frac{1}{\lambda}\int_{\bR}\sum_{\xi_2\in \bZ_{1/\lambda}} F(\xi_1, \xi_2) {\rm d} \xi_1$$
and
$$ \|F\|_{L^2(\bR\times \bZ_{1/\lambda})}^2:= \frac{1}{\lambda}\int_{\bR}\sum_{\xi_2\in \bZ_{1/\lambda}} |F(\xi_1, \xi_2)|^2 {\rm d} \xi_1  $$
for simplicity.

Before the proof of Theorem \ref{thm:hyperbolic on RT}, we need a measure estimate lemma as follows. We adopt the standard definition and terminology for semi-algebraic sets as presented in \cite{Basu2024}. A set $E \subset \bR^{d}$ is called a \emph{semi-algebraic set} provided it can be represented as a finite union of subsets, each described by a system of polynomial equalities and strict inequalities of the form $P=0, Q_1>0,\cdots, Q_k>0$. If the total number of distinct polynomials used in the definition of $E$ is at most $s$, and the maximum degree among them does not exceed $D$, we refer to the product $sD$ as an upper bound for the \emph{complexity} of $E$.

It is worth noting that throughout this paper, all sets whose $\bR^2$ or $\bR\times \bZ$ measures we need to control will be semi-algebraic with complexity $\lea 1$.

\begin{lemma}[Lemma 4.2 in \cite{DWWZ2025}]\label{lem:measure control for RZ}
Let $\lambda \ge 1$ and let $E$ be a bounded semi-algebraic set on $\bR^{2}$ with complexity $\lea 1$. Then
$$  |E|_{\bR\times \bZ_{1/\lambda}} \lea |E|_{\bR^2}+     \frac{1}{\lambda}  \sup_{\xi_2 \in \bZ_{1/\lambda}}\int_{\bR } \mathbf{1}_{E}(\xi_1,\xi_2) {\rm d}\xi_1.      $$

\end{lemma}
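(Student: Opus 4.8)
The plan is to prove the measure comparison by a slicing argument in the $\xi_2$ variable, exploiting the fact that a semi-algebraic set of bounded complexity has, for each fixed $\xi_2$, a slice $E_{\xi_2} = \{\xi_1 \in \bR : (\xi_1,\xi_2)\in E\}$ which is a union of at most $O(1)$ intervals. The sum $|E|_{\bR\times\bZ_{1/\lambda}} = \frac1\lambda \sum_{\xi_2 \in \bZ_{1/\lambda}} |E_{\xi_2}|_{\bR}$ is then a Riemann-type sum of the function $g(\xi_2) := |E_{\xi_2}|_{\bR}$, sampled at the lattice $\frac1\lambda\bZ$, while $|E|_{\bR^2} = \int_\bR g(\xi_2)\,{\rm d}\xi_2$ by Fubini. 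So the whole point is to compare a $\frac1\lambda$-spaced Riemann sum of $g$ with its integral, up to the stated error $\frac1\lambda \sup_{\xi_2} g(\xi_2)$.

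First I would record the semi-algebraic structure: by the Milnor–Thom / cylindrical decomposition bounds (Basu's book, as cited), since $E$ has complexity $\lea 1$ there is an absolute constant $M$ and a partition of the $\xi_2$-axis into at most $M$ intervals $I_1,\dots,I_m$ ($m\le M$) such that on each $I_j$ the slice $E_{\xi_2}$ consists of exactly $k_j \le M$ intervals whose endpoints $a_{j,1}(\xi_2) < \dots$ are continuous, in fact semi-algebraic, functions of $\xi_2$; consequently $g|_{I_j}$ is a finite sum of differences of these endpoint functions, hence is itself continuous and \emph{of bounded variation} on each $I_j$ with total variation controlled — here one uses that a bounded semi-algebraic function of one variable with complexity $\lea 1$ is monotone on each of $O(1)$ subintervals, so $\mathrm{Var}(g|_{I_j}) \lea \sup g$. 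The clean way to phrase the conclusion: $g: \bR \to [0,\infty)$ is supported on a bounded set, is a sum of $O(1)$ monotone pieces, and $\|g\|_\infty \lea \sup_{\xi_2\in\bZ_{1/\lambda}} g(\xi_2) + (\text{something})$ — and actually one wants $\|g\|_\infty$ itself, so a small extra step is needed to pass from $\sup$ over the lattice to $\sup$ over $\bR$, or alternatively to keep $\|g\|_\infty$ and absorb it, noting that since $g$ has $O(1)$ monotone pieces, $\sup_\bR g \lea \sup_{\bZ_{1/\lambda}} g$ fails in general only near a jump, but $g$ is continuous, so on each monotone piece the max is attained at an endpoint of $I_j$, and one can compare to a nearby lattice point paying $\frac1\lambda \cdot (\text{local variation})$.

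Next, the Riemann-sum comparison: for a function that is monotone on an interval $I$ of length $L$, the difference between $\frac1\lambda\sum_{\xi_2\in \bZ_{1/\lambda}\cap I} g(\xi_2)$ and $\int_I g$ is at most $\frac1\lambda \,\mathrm{Var}(g|_I) \le \frac1\lambda \sup_I g$, by the standard one-sided bracketing of a monotone function between its left and right step functions. Summing over the $O(1)$ pieces $I_j$ gives
$$ \Big| |E|_{\bR\times\bZ_{1/\lambda}} - |E|_{\bR^2} \Big| \;\lea\; \frac1\lambda \sup_{\xi_2\in\bR} g(\xi_2) \;=\; \frac1\lambda \sup_{\xi_2\in\bR}\int_\bR \mathbf 1_E(\xi_1,\xi_2)\,{\rm d}\xi_1, $$
which is even slightly stronger than what is claimed (sup over $\bR$ rather than over the lattice), and in particular implies the stated inequality after moving $|E|_{\bR^2}$ to the right. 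To land exactly on the lattice-sup formulation one inserts the elementary observation that $\sup_\bR g \lea \sup_{\bZ_{1/\lambda}} g$ whenever $g$ is a nonnegative function made of $O(1)$ monotone continuous pieces on a bounded set: on a monotone piece, sampling at spacing $\frac1\lambda$ misses the true sup by a factor that telescopes, and the corners between pieces are handled by continuity — this is where one must be slightly careful, so I would actually prefer to state and use the $\sup_\bR$ version and then note $\int_\bR \mathbf1_E(\xi_1,\cdot)\,{\rm d}\xi_1$ is itself semi-algebraic of bounded complexity to justify replacing $\sup_\bR$ by $\sup_{\bZ_{1/\lambda}}$ up to a constant.

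The main obstacle is the \emph{uniformity} in $\lambda$ and in the particular set $E$: everything hinges on the claim that the number of monotone pieces of the slice-area function $g$, and more basically the number of connected components of each slice $E_{\xi_2}$, is bounded by an absolute constant depending only on the complexity bound of $E$, not on $\lambda$ or on $E$ itself. This is exactly the content of the quantitative cell-decomposition / Bézout-type bounds for semi-algebraic sets (the theory behind the cited reference), and invoking it correctly — in particular that "complexity $\lea 1$" is preserved under taking slices and under the operations used — is the crux; the Riemann-sum estimate itself is then routine. A secondary subtlety is that $E$ is only assumed bounded, not compact, but since $g$ is supported on a bounded interval and has finitely many monotone pieces this causes no real trouble.
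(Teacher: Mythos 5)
Your overall strategy — slice in $\xi_2$, observe that $g(\xi_2) := \int_\bR \mathbf{1}_E(\xi_1,\xi_2)\,{\rm d}\xi_1$ is piecewise monotone with $O(1)$ pieces by the semi-algebraic cell decomposition, and compare the $\frac1\lambda$-spaced Riemann sum to the integral — is exactly the idea the paper sketches in the remark after the lemma (citing Lemma~2.9 of \cite{Basu2024} for the $O(1)$ monotonicity changes). So the route is the right one.

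However, there is a genuine error in your final accounting. You establish
$\bigl| |E|_{\bR\times\bZ_{1/\lambda}} - |E|_{\bR^2} \bigr| \lea \frac1\lambda \sup_{\xi_2\in\bR} g(\xi_2)$
and call this ``slightly stronger'' than the statement. It is in fact \emph{weaker}: since $\sup_{\bZ_{1/\lambda}} g \le \sup_\bR g$, the claimed bound (with the lattice supremum) has the smaller right-hand side, so your $\sup_\bR$ estimate does not imply it. You then try to close the gap by asserting $\sup_\bR g \lea \sup_{\bZ_{1/\lambda}} g$ for a piecewise-monotone continuous $g$ with $O(1)$ pieces, but this is false: a single thin triangular spike of height $1$ and width $\ll 1/\lambda$, placed strictly between two lattice points, has $\sup_\bR g = 1$ while $\sup_{\bZ_{1/\lambda}} g = 0$. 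Observing that $g$ is semi-algebraic of bounded complexity does not rescue this — the counterexample is itself a semi-algebraic set of complexity $O(1)$.

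The fix is to not pass through $\sup_\bR g$ at all, and in fact your own monotone Riemann-sum argument already gives the lattice supremum directly if you account for where the error actually lives. On a piece where $g$ is monotone increasing, list the lattice points $\xi_2^{(1)}<\dots<\xi_2^{(k)}$ in that piece; then $\frac1\lambda g(\xi_2^{(j)}) \le \int_{\xi_2^{(j)}}^{\xi_2^{(j+1)}} g$ for $j<k$, so
$\frac1\lambda\sum_{j=1}^{k} g(\xi_2^{(j)}) \le \int g + \frac1\lambda g(\xi_2^{(k)})$,
and $g(\xi_2^{(k)})$ is evaluated \emph{at a lattice point}, hence is $\le \sup_{\bZ_{1/\lambda}} g$. (For a decreasing piece use the first lattice point.) Summing over the $O(1)$ pieces gives exactly $|E|_{\bR\times\bZ_{1/\lambda}} \lea |E|_{\bR^2} + \frac1\lambda \sup_{\bZ_{1/\lambda}} g$, with no reference to $\sup_\bR g$. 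Note also that, in the spike example above, the left side is zero so the estimate holds vacuously, which is consistent with the corrected argument but not with the false ``observation'' you tried to insert.
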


\begin{remark}
The core idea of this lemma is that the function $\int_{\bR } \mathbf{1}_{E}(\xi_1,\xi_2) {\rm d}\xi_1 $ changes monotonicity with respect to $\xi_2$ only $O(1)$ times, from which the above estimate follows directly. When the set $E$ has a simple structure, this boundedness of the number of monotonicity changes can be verified by direct computation. For more complicated sets—in particular, for the semi-algebraic sets assumed in our lemma—this property is guaranteed by Lemma 2.9 in \cite{Basu2024}.
\end{remark}

\begin{remark} This lemma can be interpreted as follows: the total length of the dashed cross-sectional segments in the semi-algebraic set shown in Figure 1 is controlled by its area and the length of the longest cross-sectional segment.
Moreover, the boundedness condition as stated in Lemma \ref{lem:measure control for RZ} can be removed in a certain sense: proceed by considering the bounded semi-algebraic set $E_N = E \cap \{\xi \in \mathbb{R}^2 : |\xi| \le N\}$ and letting $N \to \infty$. It is important to note that the implicit constant in the aforementioned estimate depends solely on the dimension and the complexity of the semi-algebraic set.
\end{remark}

\begin{figure}[]\label{figure 1}
\centering
\begin{tikzpicture}[scale=1]
  \foreach \y in {0,0.333,0.666,1,1.333,1.666,2,2.333,2.666}
    \draw[dashed, gray] (0,\y) -- (4,\y);
  
  \fill[blue!20, opacity=0.7] 
    (1,1) .. controls (1.5,0.5) and (2,0.7) .. (2.5,1)
    .. controls (3,1.3) and (3,2) .. (2.5,2.2)
    .. controls (2,2.4) and (1.5,2.2) .. (1,2)
    .. controls (0.5,1.8) and (0.5,1.2) .. (1,1)
    -- cycle;
  \draw[blue, thick] 
    (1,1) .. controls (1.5,0.5) and (2,0.7) .. (2.5,1)
    .. controls (3,1.3) and (3,2) .. (2.5,2.2)
    .. controls (2,2.4) and (1.5,2.2) .. (1,2)
    .. controls (0.5,1.8) and (0.5,1.2) .. (1,1);
  
\end{tikzpicture}
\caption{A schematic illustration of the semi-algebraic set and its cross sections.}
\end{figure}
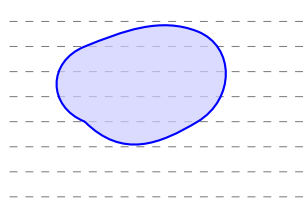

Throughout this section, we restrict to the case where $\lambda=1$.

Define $H(\xi)=\xi_1^2-\xi_2^2$, $H(\xi, \eta)=\xi_1\eta_1-\xi_2\eta_2$ for $\xi=(\xi_1,\xi_2)\in \bR^2, \eta=(\eta_1,\eta_2)\in \bR^2$. The solution to \ref{eq: linear eqution} is given by
$$ e^{it \square} \phi(x)=\int_{\bR\times \bZ} e(x\cdot \xi-t H(\xi))\widehat{\phi}(\xi) {\rm d}\xi,    $$
where $e(z)$ denotes $e^{2\pi i z}$ for $z\in \mathbb{C}$.

\begin{proof}[Proof of Theorem \ref{thm:hyperbolic on RT}]
Analytically, $\mathbb{R} \times \mathbb{Z}_{1/2}$ differs from $\mathbb{R} \times \mathbb{Z}$ only by a constant factor. Therefore, for the sake of simplicity in exposition, we will not distinguish between them through the proof. Let $\varphi(t)\in \mathcal{S}(\R)$ be such that: (1) $\widehat{\varphi}(\tau)\geq 0$ for all $\tau\in\R$; (2) the support of $\widehat{\varphi}$ lies in $[-\frac12,\frac12]$; (3)  $\varphi(t)\ge 0$ for $t\in \bR$, and $ \varphi(t)\ge 1$ for $t\in[-2,2]$.

A direct computation shows that 
\begin{align*}
    &\|e^{it \square} \phi\|_{L^4_{t,x}([0,1]\times \bR\times \bT)}^4\\
    \lea & \|e^{it \square} \phi \cdot (\varphi(t))^\frac14  \|_{L^4_{t,x}(\bR\times \bR\times \bT)}^4 \\
= & \int_{(\bR\times \bZ)^4} \delta_0( \xi-\eta+\gamma-h) \widehat{\varphi}\l(H(\xi)-H(\eta)+H(\gamma)-H(h)\r) \widehat{\phi}(\xi) \overline{\widehat{\phi}(\eta)} \widehat{\phi}(\gamma) \overline{ \widehat{\phi}(h)}   {\rm d} \xi {\rm d} \eta {\rm d} \gamma {\rm d} h.
\end{align*}
Define $f=|\widehat{\phi}|$ and make the linear change of variables 
\begin{align*}
    \left\{
    \begin{array}{lll}
        u&=& \frac12( \xi+\gamma)=\frac12(\eta+h) , \\
        v&=& \frac12(\xi-\gamma),\\
        w&=& \frac12( \eta-h). 
    \end{array}
    \right.
\end{align*}
Using $\widehat\varphi \lea \ca_{[-\frac12, \frac12]}$ and $$H(\xi)-H(\eta)+H(\gamma)-H(h)=2(H(v)-H(w)),$$ it suffices to prove that
\begin{equation}\label{eq:4-linear for hyperbolic RT}
\int_{\bR \times \bZ}\int_{\bR \times \bZ}\int_{\bR \times \bZ} \ca_{|H(v)-H(w)|\le 1} f(u+v)f(u-v)f(u+w)f(u-w) {\rm d}u {\rm d}v {\rm d}w \lea \|f\|_{L^2}^4,
\end{equation}
where $f\in L^2(\bR\times \bZ)$ is a non-negative function. 

Define
$$ A_1=\{(v,w)\in \bR^2: |H(v)-H(w)|\le 1,  |H(v,w)|^2\le 100 |H(v) H(w)|   \},  $$
and
\begin{equation}\label{eq:def of A_2}
A_2=\{(v,w)\in \bR^2: |H(v)-H(w)|\le 1,  |H(v,w)|^2> 100 |H(v) H(w)|   \}.
\end{equation}
We now show that estimate \eqref{eq:4-linear for hyperbolic RT} will follow once the following two estimates are established:
\begin{equation}\label{eq:main measure est-1}
\l\| \int_{\bR\times \bZ}\ca_{A_1}(v,w){\rm d}w \r\|_{L^\infty_v(\bR\times \bZ)}\lea 1
\end{equation}
and
 \begin{equation}\label{eq:main measure est-2}
\l\| \int_{\bR\times \bZ} \ca_{A_2}(u+v,u-v) {\rm d}u \r\|_{L^\infty_{v}(\bR\times \bZ)}\lea 1 .
\end{equation}

We break
  $\ca_{|H(v)-H(w)|\le 1}$  
into $ \ca_{A_1}(v,w)+\ca_{A_2}(v,w)$. By symmetry, using \eqref{eq:main measure est-1} and the AM–GM inequality, we obtain
\begin{align*}
   & \int_{\bR\times \bZ}\int_{\bR\times \bZ}\int_{\bR\times \bZ}\ca_{A_1}(v,w) f(u+v)f(u-v)f(u+w)f(u-w) {\rm d}u {\rm d}v {\rm d}w \\
\lea & \int_{\bR\times \bZ}\int_{\bR\times \bZ}\int_{\bR\times \bZ}  \ca_{A_1}(v,w) f^2(u+v)f^2(u-v){\rm d}u {\rm d}v {\rm d}w \\
\lea & \|f\|_{L^2}^4.
\end{align*}
On the other hand, by symmetry and applying the AM–GM inequality, we have
\begin{align*}
   & \int_{\bR\times \bZ}\int_{\bR\times \bZ}\int_{\bR\times \bZ}\ca_{A_2}(v,w) f(u+v)f(u-v)f(u+w)f(u-w) {\rm d}u {\rm d}v {\rm d}w \\
\lea & \int_{\bR\times \bZ}\int_{\bR\times \bZ}\int_{\bR\times \bZ}  \ca_{A_2}(v,w) f^2(u+v)f^2(u+w){\rm d}u {\rm d}v {\rm d}w \\
= &  \int_{\bR\times \bZ}\int_{\bR\times \bZ}\int_{\bR\times \bZ}  \ca_{A_2}(v-u,w-u) f^2(v)f^2(w){\rm d}u {\rm d}v {\rm d}w. 
\end{align*}
Thus, by employing a change of coordinates and applying \eqref{eq:main measure est-2}, we complete the proof of \eqref{eq:4-linear for hyperbolic RT}. Note that here we do not distinguish between $\mathbb{R} \times \mathbb{Z}_{1/2}$ and $\mathbb{R} \times \mathbb{Z}$, as they differ only by a constant factor analytically.

It remains to prove \eqref{eq:main measure est-1} and \eqref{eq:main measure est-2}; we will establish these as Proposition \ref{prop:A1} and Proposition \ref{prop:A2-wrong} below.

\end{proof}

\begin{proposition}\label{prop:A1}
  For $v\in \bR\times \bZ$,  define
  $$E(v)=\{w\in \bR^2:  |H(v)-H(w)|\le 1,  |H(v,w)|^2\le 100 |H(v) H(w)|    \}.  $$
  It holds
  $$  \| |E(v)|_{\bR\times \bZ} \|_{L^\infty_v(\bR\times \bZ)} \lea 1. $$
\end{proposition}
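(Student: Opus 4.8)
The plan is to fix $v \in \bR \times \bZ$, write $a := H(v) = v_1^2 - v_2^2$, and bound $|E(v)|_{\bR \times \bZ}$ by invoking the measure estimate Lemma \ref{lem:measure control for RZ}. Since $E(v)$ is cut out by the two conditions $|H(w) - a| \le 1$ and $|H(v,w)|^2 \le 100\, a\, H(w)$ — polynomial inequalities in $w = (w_1, w_2)$ of bounded degree with $v$-coefficients — the set $E(v)$ is semi-algebraic of complexity $\lesssim 1$, uniformly in $v$. Hence it suffices to prove the two bounds
\begin{equation}\label{eq:A1-plan}
|E(v)|_{\bR^2} \lesssim 1 \qquad \text{and} \qquad \sup_{w_2 \in \bZ} \int_{\bR} \mathbf{1}_{E(v)}(w_1, w_2)\, {\rm d} w_1 \lesssim 1,
\end{equation}
both uniformly in $v \in \bR\times\bZ$; the proposition then follows since the $\bR\times\bZ$ measure is controlled by the sum of these two quantities.

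For the Euclidean bound, the first condition localizes $H(w) = w_1^2 - w_2^2$ to the slab $\{|w_1^2 - w_2^2 - a| \le 1\}$, which is a region between two branches of hyperbolae (or, if $a \approx 0$, a neighborhood of the light cone). On its own this slab has infinite area, so the second condition must do the work of confining $w$. The key geometric fact is that $H(v,w) = v_1 w_1 - v_2 w_2$ is (up to the indefinite signature) the ``inner product'' associated to $\square$, and the Cauchy–Schwarz-type identity
$$ H(v,w)^2 - H(v) H(w) = -(v_1 w_2 - v_2 w_1)^2 $$
(one checks this by direct expansion) shows that $|H(v,w)|^2 \le 100\, |H(v) H(w)|$ together with $|H(v)|, |H(w)|$ comparable forces $w$ to lie in a bounded-eccentricity region; more carefully, on $A_1$ we have $|H(w)| \le |a| + 1$, and combined with the constraint $H(v,w)^2 \lesssim |a|(|a|+1)$ this pins both $w_1$ and $w_2$ into an interval of length $\lesssim \max(1, \sqrt{|a|})$ in each coordinate after rotating into the eigenbasis of the quadratic form — but the slab $\{|H(w) - a|\le 1\}$ has width $\lesssim 1/\sqrt{|a|}$ transverse to its spine when $|a|$ is large, so the area is $\lesssim \sqrt{|a|} \cdot (1/\sqrt{|a|}) \lesssim 1$. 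The degenerate regime $|a| \lesssim 1$ needs to be treated separately and there one checks directly that the combined constraints give a set of area $O(1)$.

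For the one-dimensional fiber bound in \eqref{eq:A1-plan}, fix $w_2 \in \bZ$ and integrate in $w_1$: the condition $|w_1^2 - (w_2^2 + a)| \le 1$ restricts $w_1$ to at most two intervals, each of length $\lesssim \min(1, 1/\sqrt{w_2^2 + a + 1}) \lesssim 1$, so the $w_1$-measure of the fiber is $O(1)$ regardless of the second constraint — this part is easy. The main obstacle is the Euclidean area estimate, specifically handling the interplay between the two quadratic constraints uniformly across all values of $a = H(v)$, including the transitional range $|a| \sim 1$ and the possibility that $v$ itself is near the light cone; the clean way through is the algebraic identity above, which converts the ``$A_1$ vs.\ $A_2$'' dichotomy into the transparent statement that $A_1$ is where $(v_1 w_2 - v_2 w_1)^2$ is comparable to $|H(v)H(w)|$ and hence $w$ cannot escape to infinity. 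Once \eqref{eq:A1-plan} is in hand, Lemma \ref{lem:measure control for RZ} closes the argument.
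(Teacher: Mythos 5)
Your reduction via Lemma~\ref{lem:measure control for RZ} to the two bounds in your display (Euclidean area and longest horizontal fiber) is exactly the paper's strategy, and your fiber bound is correct and essentially identical to theirs. The problem is in the Euclidean area estimate, where your argument has a concrete error.

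You claim that the two constraints ``pin both $w_1$ and $w_2$ into an interval of length $\lesssim \max(1,\sqrt{|a|})$,'' where $a=H(v)$, and then multiply a length $\sqrt{|a|}$ by a transverse width $1/\sqrt{|a|}$. This is false: the extent of $E(v)$ depends on $v$ and not merely on $a=H(v)$. For instance take $v_2=M\in\bZ$ and $v_1=\sqrt{M^2+2}$ with $M\gg 1$, so that $a=H(v)=2$ stays $O(1)$. Then $E(v)$ is (roughly) a thin strip hugging the line $w\parallel v$, extending over $|w_1|,|w_2|\sim M$ with transverse width $\sim 1/M$; its diameter is unbounded as $M\to\infty$ even though $a$ is fixed. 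The area is still $O(1)$, but your length-times-width accounting gives the wrong length and the wrong width, and they only happen to cancel. In other words, your argument silently replaces the two independent quantities $c=v_1+v_2$ and $d=v_1-v_2$ by $\sqrt{|cd|}=\sqrt{|a|}$, which is not legitimate. The paper's proof avoids this by passing to the $v$-adapted light-cone coordinates $\alpha=(w_1+w_2)/c$, $\beta=(w_1-w_2)/d$; there the constraints become $|\alpha\beta-1|\le|cd|^{-1}$ and $|\alpha|\sim|\beta|$, the anisotropy in $v$ is absorbed into the Jacobian $|cd|$, and a clean two-case estimate in $(\alpha,\beta)$ gives the area $\lesssim 1$ uniformly. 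Your phrase ``rotating into the eigenbasis of the quadratic form'' does not supply this, since $H(w)=w_1^2-w_2^2$ is already diagonal; what is needed is the $v$-dependent rescaling.

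One smaller point: your ``Cauchy--Schwarz-type'' identity has the wrong sign. Direct expansion gives $H(v,w)^2 - H(v)H(w) = (v_1 w_2 - v_2 w_1)^2 \ge 0$, i.e.\ the reverse Cauchy--Schwarz inequality, as one expects for an indefinite form. This does not by itself break your argument, but together with the pinning error it indicates the geometric picture underlying your area estimate is not quite right; the $(\alpha,\beta)$ change of variables is the fix.
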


\begin{proposition}\label{prop:A2-wrong}
    For $v\in \bR\times \bZ$,  define
\begin{align*}
E(v)= &\{u\in \bR^2:  |H(u+v)-H(u-v)|\le 1,  |H(u+v,u-v)|^2> 100 |H(u+v) H(u-v)|    \}.
\end{align*}
  It holds
  $$  \| |E(v)|_{\bR\times \bZ} \|_{L^\infty_v(\bR\times \bZ)} \lea 1. $$
\end{proposition}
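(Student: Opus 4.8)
\textbf{Proof proposal for Proposition \ref{prop:A2-wrong}.}

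The plan is to first simplify the two defining conditions of $E(v)$ by expanding $H$ in the shifted variables. Writing $H(u+v)-H(u-v) = 4H(u,v) = 4(u_1v_1 - u_2v_2)$, the first constraint becomes $|H(u,v)| \lea 1$, i.e. $u$ lies in a slab of width $O(1/|v|)$ around the line $u_1 v_1 = u_2 v_2$. For the second constraint, expand $H(u+v,u-v) = H(u) - H(v)$ (using the symmetry of the bilinear form), and similarly $H(u+v)H(u-v) = (H(u)+H(v) + 2H(u,v))(H(u)+H(v)-2H(u,v)) = (H(u)+H(v))^2 - 4H(u,v)^2$. Thus on the set where $|H(u,v)| \lea 1$ the condition $|H(u+v,u-v)|^2 > 100|H(u+v)H(u-v)|$ reads, up to $O(1)$ errors, $(H(u)-H(v))^2 > 100\big((H(u)+H(v))^2 - 4H(u,v)^2\big)$. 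Treating $H(u,v) = O(1)$ as negligible for the leading behavior, this forces $(H(u)-H(v))^2 \gtrsim (H(u)+H(v))^2$, which after elementary algebra means $H(u)H(v) < 0$ (plus a controlled boundary layer), i.e. $H(u)$ and $H(v)$ have opposite signs and moreover $|H(u)+H(v)|$ is small compared to $|H(u)-H(v)|$ — in particular $|H(u)|$ and $|H(v)|$ are comparable, so $|H(u)| \sim |H(v)|$ is \emph{pinned} to a bounded annulus determined by $v$. This is the geometric heart: the second condition confines $u$ to the region where $\xi_1^2 - \xi_2^2$ (evaluated at $u$) lies in a fixed bounded set, while the first condition confines $u$ to a thin slab.

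With this reduction, the strategy is to apply Lemma \ref{lem:measure control for RZ} to the semi-algebraic set $E(v) \subset \bR^2$ (it has complexity $\lea 1$ uniformly in $v$, since it is cut out by boundedly many polynomial inequalities of bounded degree with $v$ as parameters). The lemma reduces the claim to two Euclidean estimates, uniform in $v$: (i) $|E(v)|_{\bR^2} \lea 1$, and (ii) $\sup_{u_2 \in \bZ} \int_\bR \mathbf{1}_{E(v)}(u_1,u_2)\,{\rm d}u_1 \lea 1$. For (i): the slab $|u_1 v_1 - u_2 v_2| \lea 1$ intersected with $\{H(u)\text{ in a bounded set}\}$ is a region whose area I expect to compute directly — parametrize along the slab by arclength $s$ and note that moving distance $s$ along the line $u_1 v_1 = u_2 v_2$ changes $H(u) = u_1^2 - u_2^2$ at a rate that is, away from degenerate directions, $\gtrsim |v|^{-1}\cdot(\text{distance})$, so the $O(1/|v|)$-slab $\times$ $O(1)$-window in $H$ has area $O(1)$. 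The degenerate directions of $v$ (near $v_1 = \pm v_2$, where the line $u_1v_1 = u_2 v_2$ is nearly null for $H$) need separate but elementary handling: there $H(v)$ is small, so the annulus $|H(u)| \sim |H(v)|$ is itself small, again giving area $O(1)$. For (ii): fixing the integer $u_2$, the constraints become one-variable polynomial inequalities in $u_1$ of bounded degree, so the solution set is a union of $O(1)$ intervals; the first (slab) constraint bounds the total length of those intervals that are not already forced to be short, and one checks the window in $H(u) = u_1^2 - u_2^2$ at fixed $u_2$ translates to $u_1$ lying in $O(1)$ intervals of total length $O(1)$ — here the ``pinning'' of $|H(u)|$ to a bounded set is what prevents $u_1$ from running off to infinity.

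The main obstacle I anticipate is bookkeeping the $O(1)$ additive errors in the reduction of the second constraint: the honest inequality is $(H(u)-H(v))^2 > 100((H(u)+H(v))^2 - 4H(u,v)^2)$ with $|H(u,v)| \le $ a fixed constant, not $= 0$, and one must check that the slack does not let $|H(u)|$ escape to infinity — i.e. that the genuine region still lies in a \emph{bounded} annulus in $H(u)$ plus a bounded slab, so that Lemma \ref{lem:measure control for RZ}'s boundedness hypothesis (or its removal via the $E \cap \{|u| \le N\}$ truncation noted in the remark) applies. Concretely one shows: on $E(v)$, either $|H(u)| \lea 1 + |H(v)|$ outright, or the pinning $|H(u)| \sim |H(v)|$ holds; in both cases $|H(u)| \lea 1 + |H(v)|$, and combined with $|H(u,v)| \lea 1$ this is enough. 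A secondary nuisance is the case $|v|$ small (say $|v| \lea 1$), where the slab is not thin; but then $H(v) = O(1)$ forces $|H(u)| \lea 1$ via the annulus, so $u$ ranges over a bounded set and everything is trivial. Once these case distinctions are organized, (i) and (ii) are routine one- and two-dimensional area computations, and Lemma \ref{lem:measure control for RZ} finishes the proof.
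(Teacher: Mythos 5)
There is a genuine gap, and in fact the statement you are trying to prove is \emph{false} as written --- which is why the paper labels it \texttt{prop:A2-wrong}, explicitly remarks that ``we cannot directly prove the length of the longest line segment in $E(v)$ \dots is $\lea 1$,'' and replaces it by the refined Proposition~\ref{prop:A2} before giving any proof.

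Here is a concrete counterexample. Take $v=(0,N)\in\bR\times\bZ$ with $N$ a large integer and look at the line $u_2=0$. Then $u+v=(u_1,N)$ and $u-v=(u_1,-N)$, so $H(u+v)=H(u-v)=u_1^2-N^2$ (the first constraint is $0\le 1$, automatically satisfied), while $H(u+v,u-v)=u_1^2+N^2$. The second constraint reads $(u_1^2+N^2)^2>100\,(u_1^2-N^2)^2$, i.e.\ $u_1^2+N^2>10|u_1^2-N^2|$, which is precisely $u_1^2\in(\tfrac{9}{11}N^2,\tfrac{11}{9}N^2)$. Thus the slice $\{u_1:(u_1,0)\in E(v)\}$ is a union of two intervals of total length $\sim N$, and already the $u_2=0$ term in $\sum_{u_2\in\bZ}\int_\bR\ca_{E(v)}\,du_1$ gives $|E(v)|_{\bR\times\bZ}\gtrsim N\to\infty$. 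So $\|\,|E(v)|_{\bR\times\bZ}\,\|_{L^\infty_v}=\infty$.

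The precise step in your argument that fails is your item (ii). You correctly observe that on $E(v)$ one has $|H(u,v)|\lea 1$ and $|H(u)|\lea 1+|H(v)|$, but then you assert that this ``translates to $u_1$ lying in $O(1)$ intervals of total length $O(1)$'' at each fixed integer $u_2$. It does not: the annulus in $H(u)$ has width $\sim 1+|H(v)|$, which is \emph{not} uniformly bounded in $v$ (here $|H(v)|=N^2$), and the slab $|u_1 v_1-u_2 v_2|\lea 1$ places no restriction on $u_1$ when $v_1=0$. With $u_2=0$ the constraint $|u_1^2|\lea 1+N^2$ gives a $u_1$-range of length $\sim N$, exactly the counterexample above. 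Your ``secondary nuisance'' paragraph treats the case of small $|v|$, but the failure occurs for \emph{large} $|v|$; and the Euclidean-measure part (i), which you treat correctly, is the one the paper also confirms — it is the line-segment part that breaks.

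The paper's fix is to go back one step in the kernel decomposition and insert the sign/ordering cutoffs $\ca_{J_j}(v)\ca_{J_j}(w)$ before passing to the set $A_2$; after the change of variables these become the extra constraints $u_1+v_1\ge u_2+v_2\ge 0$, $u_1-v_1\ge u_2-v_2\ge 0$ appearing in Proposition~\ref{prop:A2}. Those constraints force $u_2\ge|v_2|$ and $u_1-u_2\ge|v_1-v_2|$, which precisely excludes the small-$u_2$ configurations (such as $u_2=0$ with $|v_2|=N$) that make the unrefined line-segment bound blow up. If you want a correct proof you cannot argue from the definition of $E(v)$ alone; you must incorporate an additional restriction of this type, as the paper does.
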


Before proceeding to the proofs of Proposition \ref{prop:A1} and Proposition \ref{prop:A2-wrong}, we provide a brief explanation: under the requirement that the initial data has a frequency cutoff $\le N$, it is not difficult to obtain a version of the estimate in Theorem \ref{thm:hyperbolic on RT} with a loss of $(\log N)^{\frac14}$.  Following the proof of Theorem \ref{thm:hyperbolic on RT} as previously described, and without decomposing into the cases of $A_1$ and $A_2$, the problem reduces to considering the following estimate, which is analogous to \eqref{eq:main measure est-1},
$$ 
\l\| |E(v)|_{\bR\times \bZ} \r\|_{L^\infty_v(\bR\times \bZ)}\lea \log N ,  $$
where
$$ E(v)=\{w\in \bR^2: |H(v)-H(w)|\le 1, |v|, |w|\lea N \}. $$
Indeed, it suffices to fix $|C_0| \lea N^2$ and prove the following measure estimate:
$$ \l| \{w\in \bR^2: |H(w)-C_0|\le 1, |w|\lea N  \} \r|_{\bR\times \bZ}\lea \log N.   $$
Using Lemma \ref{lem:measure control for RZ}, we only need to bound both the Euclidean measure 
\begin{equation}\label{eq:Euclidean measure est-0}
 \l| \{w\in \bR^2: |H(w)-C_0|\le 1, |w|\lea N  \} \r|_{\bR^2}   
\end{equation}
and the length of the longest line segment
\begin{equation}\label{eq:line segment est-0}
\sup_{w_2\in \bZ, |w_2|\lea N} \l| \{w_1\in \bR: |H(w)-C_0|\le 1, |w_1|\lea N  \} \r|_{\bR}.    
\end{equation}
It is not difficult to check \eqref{eq:line segment est-0} is bound by $O(1)$. For \eqref{eq:Euclidean measure est-0}, we change of variables $a=w_1+w_2, b=w_1-w_2$, then
\begin{align*}
\l| \{w\in \bR^2: |H(w)-C_0|\le 1, |w|\lea N  \} \r|_{\bR^2}\sim & |\{(a,b)\in \bR^2: |ab-C_0|\le 1, |a|, |b|\lea N  \}|_{\bR^2} \\
\lea & 1+ \int_1^{CN} \frac{1}{s} {\rm d}s \sim  \log N.
\end{align*}

In addition, if we consider the sharp Strichartz estimate 
$$\|e^{it \partial_{x_1} \partial_{x_2}} P_{\le N} \phi     \|_{L_{t,x}^4([0,1]\times \bR\times \bT)} \lea N^{\frac14} \|\phi\|_{L^2},  $$
we can similarly reduce the problem to proving 
$$\l| \{w\in \bR^2: |\w{H}(w)-C_0|\le 1, |w|\lea N  \} \r|_{\bR\times \bZ}\lea N $$
 by the same method, where $\w{H}(\xi)=\xi_1 \xi_2$ and $|C_0|\lea N$ is a constant. By calculation, it can be seen that in the case where $|C_0|\ll 1$, the length of the longest segment dominates the estimate, satisfying 
 $$\sup_{w_2\in \bZ, |w_2|\lea N} \l| \{w_1\in \bR: |w_1 w_2-C_0|\le 1, |w_1|\lea N  \} \r|_{\bR}\sim N.$$

Now we return to the main topic and proceed to prove Proposition \ref{prop:A1} and Proposition \ref{prop:A2-wrong}. Indeed, the method aligns with what has been outlined above: after applying Lemma \ref{lem:measure control for RZ}, it suffices to compute and establish upper bounds for both the Euclidean measure and the length of the longest line segment. It must be emphasized that we cannot directly prove the length of the longest line segment in $E(v)$ of Proposition \ref{prop:A2-wrong} is $\lea 1$ (although the Euclidean measure bound $\lea 1$ still holds). Therefore, we must impose additional constraints in the defining expression \eqref{eq:def of A_2} for $A_2$. We introduce a refinement of $A_2$ here. Note that
$$ \ca_{|H(v)-H(w)|\le 1}\le \sum_{k\in\bZ} \ca_{|H(v)-k|\le 1}\ca_{|H(w)-k|\le 1}\lea \ca_{|H(v)-H(w)|\le 2}  , $$
so the left hand of \eqref{eq:4-linear for hyperbolic RT} is bounded by
\begin{align*}
& \sum_{k\in \bZ}\int_{\bR\times \bZ} \l(\int_{\bR\times \bZ}\ca_{|H(v)-k|\le 1}f(u+v)f(u-v){\rm d}v\r)^2 {\rm d}u\\
\lea & \sum_{j\in \{-1,1\}^3}\sum_{k\in \bZ}\int_{\bR\times \bZ} \l(\int_{\bR\times \bZ}\ca_{|H(v)-k|\le 1} \ca_{J_j}(v)   f(u+v)f(u-v){\rm d}v\r)^2 {\rm d}u \\
\lea & \sum_{j\in \{-1,1\}^3}\int_{\bR \times \bZ}\int_{\bR \times \bZ}\int_{\bR \times \bZ} \ca_{|H(v)-H(w)|\le 2} \ca_{J_j}(v) \ca_{J_j}(w) f(u+v)f(u-v)f(u+w)f(u-w) {\rm d}u {\rm d}v {\rm d}w,
\end{align*}
where $J_{j}$ is defined by $\{v\in \bR^2: j_1 v_1 \ge 0, j_2 v_2\ge 0, j_3 H(v)\ge 0   \}$, $j\in \{-1,1\}^3$. Thus, following the previous approach in the proof of Theorem \ref{thm:hyperbolic on RT}, we now restrict our consideration to the case where the definition of $A_2$ in \eqref{eq:main measure est-2} is refined as 
$$  A_2=\{(v,w)\in \bR^2: |H(v)-H(w)|\le 2,  |H(v,w)|^2> 100 |H(v) H(w)|, v\in J_j, w\in J_j   \}  $$
for a fixed $j\in \{-1,1\}^3$.
The scenario for $j=(1,1,1)$ will be addressed in Proposition \ref{prop:A2}, while other cases follow a similar approach.

\begin{proposition}\label{prop:A2}
    For $v\in \bR\times \bZ$,  define
\begin{align*}
E(v)= &\{u\in \bR^2:  |H(u+v)-H(u-v)|\le 2,  |H(u+v,u-v)|^2> 100 |H(u+v) H(u-v)|    \}  \\
& \bigcap \{u\in \bR^2: u_1+v_1\ge  u_2+v_2\ge 0, u_1-v_1\ge  u_2-v_2\ge 0     \}.
\end{align*}
  It holds
  $$  \| |E(v)|_{\bR\times \bZ} \|_{L^\infty_v(\bR\times \bZ)} \lea 1. $$
\end{proposition}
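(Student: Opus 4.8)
\textbf{Proof plan for Proposition \ref{prop:A2}.}

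The plan is to follow the scheme already announced in the text: apply Lemma \ref{lem:measure control for RZ} to the semi-algebraic set $E(v)$ (which has complexity $\lea 1$ uniformly in $v$), so that it suffices to bound separately the Euclidean measure $|E(v)|_{\bR^2}$ and the length of the longest horizontal cross-section $\sup_{u_2 \in \bZ} |\{u_1 \in \bR : u \in E(v)\}|_{\bR}$, both uniformly in $v \in \bR \times \bZ$. First I would record the algebraic identities $H(u+v)-H(u-v) = 4 H(u,v)$ and $H(u+v,u-v) = H(u) - H(v)$, and $H(u+v)H(u-v) = (H(u)-H(v))^2 - (2H(u,v))^2$ — wait, more carefully, $H(u+v)H(u-v) = (H(u)+H(v))^2 - 4H(u,v)^2$ after expanding; I would verify the exact bilinear identity and then rewrite the two defining conditions purely in terms of the quantities $a := H(u,v)$, $b := H(u) - H(v)$ (or $H(u)+H(v)$), and $H(v)$. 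The first condition becomes $|a| \lea 1$, i.e. $u$ lies in an $O(1)$-neighborhood (in the appropriate sense) of the hyperbola $H(u,v)=0$. The nondegeneracy condition $|H(u+v,u-v)|^2 > 100\,|H(u+v)H(u-v)|$ is exactly the ``transversal'' regime, and the point of introducing it is that on this set the two foliations $\{H(u)=\text{const}\}$ and $\{H(u,v)=\text{const}\}$ are quantitatively transverse, which is what will make both measures finite.

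For the Euclidean bound I would use the change of variables $a = u_1 + u_2$, $b = u_1 - u_2$ (as in the $(\log N)^{1/4}$ heuristic computation in the text), under which $H(u) = ab$ becomes a product and the orthant constraint $u_1 \pm v_1 \ge u_2 \pm v_2 \ge 0$ becomes a box/sector constraint. Then $|H(u,v)| \lea 1$ becomes a strip of the form $|\alpha b + \beta a| \lea 1$ for coefficients depending on $v$, and the condition $|ab - H(v)| = |H(u)-H(v)|$ being \emph{large} compared to... actually here one uses the opposite: in the $A_2$ regime $|H(v)|$ and $|H(u)|=|ab|$ are both dominated by $H(u,v)^2 \lea 1$ up to the factor $100$, so in fact $|ab| \lea 1$ and $|H(v)| \lea 1$ on $E(v)$; combined with the strip condition this confines $(a,b)$ to a bounded region on which a direct integration (splitting into dyadic ranges of $|a|$, as in the $\log$-computation but now with the extra constraint $|ab|\lea 1$ truncating the divergent $\int ds/s$) yields $|E(v)|_{\bR^2} \lea 1$. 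I expect this step to be essentially the computation already sketched for \eqref{eq:Euclidean measure est-0}, now \emph{improved} to $O(1)$ precisely because the $A_2$ condition forces $|H(u)| \lea 1$ rather than merely $|H(u)| \lea N^2$.

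The main obstacle, and the reason the orthant restriction $J_j$ was imposed, is the cross-section bound: fixing $u_2 = n \in \bZ$, one must show $|\{u_1 : u \in E(v)\}|_{\bR} \lea 1$ uniformly in $n$ and $v$. Here the condition $|H(u,v)| = |u_1 v_1 - u_2 v_2| \lea 1$ is, for fixed $u_2$, a constraint $|u_1 v_1 - c| \lea 1$ which controls $u_1$ to an interval of length $\lea 1/|v_1|$ — useless when $|v_1|$ is small. In that regime one must instead extract the control from the second inequality $|H(u)-H(v)|^2 < \tfrac{1}{100}|H(u+v)H(u-v)|$ together with $|H(u)-H(v)| \lea 1$; rewriting $H(u) = u_1^2 - n^2$, this is a constraint of the form $|u_1^2 - (n^2 + H(v))| \lea 1$, which pins $u_1$ to an interval of length $\lea 1/|u_1|$, again useless when $|u_1|$ is small. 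The resolution is that on the orthant $J_{(1,1,1)}$ one has $u_1 \ge u_2 \ge 0$ and $v_1 \ge v_2 \ge 0$, so $u_1$ and $v_1$ cannot both be small while the relevant quantities $H(v), H(u)$ stay $O(1)$ — I would do a case split on whether $|v_1| \gtrsim 1$ (use the bilinear strip) or $|v_1| \lea 1$ (then $|v_2| \lea |v_1| \lea 1$ since $v_1 \ge v_2 \ge 0$, so $|H(v)| \lea 1$ trivially, and one falls back on the quadratic constraint in $u_1$, using $u_1 \ge u_2 = n$ and the $A_2$-nondegeneracy to guarantee $u_1 \gtrsim 1$ on the relevant part, the part $u_1 \lea 1$ contributing an interval of length $\lea 1$ for free), and in each branch extract an interval of length $\lea 1$. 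Carrying this case analysis out carefully, using that all sets involved are semi-algebraic of bounded complexity so that ``number of connected components'' arguments apply, is the technical heart; the remaining orthants $j \ne (1,1,1)$ are handled by the same dichotomy after relabeling signs.
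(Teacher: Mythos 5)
There are several genuine gaps in your proposal, concentrated precisely where the paper needs the most care.

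First, your claim that the $A_2$ condition forces $|H(u)|\lea 1$ (and hence "confines $(a,b)$ to a bounded region") is false. Writing $H(u+v)H(u-v)=(H(u)+H(v))^2-4H(u,v)^2$ and using $|H(u,v)|\lea 1$, the $A_2$ inequality $|H(u)-H(v)|^2>100\,|H(u+v)H(u-v)|$ rearranges to $99\,H(u)^2+202\,H(u)H(v)+99\,H(v)^2<O(1)$, and this quadratic form is indefinite (determinant $99^2-101^2<0$), so its sublevel sets are unbounded. On $E(v)$ one can have $H(u)\approx -H(v)$ with $|H(u)|$ arbitrarily large; the $A_2$ regime does not at all say "$|H(u)|,|H(v)|\lea 1$". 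So the dyadic computation you describe (truncating the divergent $\int ds/s$ by $|ab|\lea 1$) does not go through. The paper avoids this by normalizing $\alpha=(u_1+u_2)/(v_1+v_2)$, $\beta=(u_1-u_2)/(v_1-v_2)$, after which both constraints become genuinely two-variable conditions in $(\alpha,\beta)$: a thin strip $|\alpha+\beta|\lea|cd|^{-1}$ intersected with $|(\alpha+1)(\beta-1)|\gg|(\alpha-1)(\beta+1)|$, and the key observation is that the latter forbids $|\alpha|\gg1$ and $|\beta|\gg1$ simultaneously, while fixing one variable confines the other to an interval of length $O(|cd|^{-1})$.

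Second, your cross-section argument misstates both the constraints and the geometry. You invoke a bound "$|H(u)-H(v)|\lea 1$" as if it were one of the defining conditions of $E(v)$; it is not. The two defining inequalities are $|H(u,v)|\lea 1$ (from $|H(u+v)-H(u-v)|\le 2$, since $H(u+v)-H(u-v)=4H(u,v)$) and the $A_2$ nondegeneracy $|H(u)-H(v)|^2>100\,|H(u+v)H(u-v)|$ — there is no smallness of $H(u)-H(v)$ available, and you also wrote the $A_2$ inequality with the sign reversed. More seriously, you assert $v_1\ge v_2\ge 0$ from the orthant restriction. But the orthant constraints in Proposition \ref{prop:A2} are on $u+v$ and $u-v$, not on $v$: one gets $u_1\ge u_2\ge 0$, $u_2\ge|v_2|$, $u_1-u_2\ge|v_1-v_2|$, with no sign or ordering constraint on $(v_1,v_2)$ themselves. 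This is why the paper has a third, nontrivial case $|v_1|\ll 1$, $|v_2|\gg 1$ — impossible under your assumed ordering — where the bound comes not from an interval estimate but from showing that the cross-section is actually empty (using $|u_2|\ge|v_2|\gg 1$ together with the consequence $|u_1+u_2|\lea|v_1+v_2|$ of the $A_2$ condition, to contradict $|u_1v_1-u_2v_2|\lea 1$). Your dichotomy misses this case entirely, and the $A_1$-style "$u_1\gtrsim 1$" argument you sketch does not replace it.
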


\begin{proof}[Proof of Proposition \ref{prop:A1}]
Fix $v\in \bR\times \bZ$ with $H(v)\ne 0$. Applying Lemma \ref{lem:measure control for RZ}, it suffices to prove the Euclidean measure estimate
\begin{equation} \label{eq:Euclidean measure est-1}
\l| \{w\in \bR^2:  |H(v)-H(w)|\le 1,  |H(v,w)|^2\le 100 |H(v) H(w)|    \} \r|_{\bR^2}\lea 1
\end{equation}
and the estimate for the length of the longest line segment
\begin{equation} \label{eq:line segment est-1}
\sup_{w_2\in \bZ}\l| \{w_1\in \bR:  |H(v)-H(w)|\le 1,  |H(v,w)|^2\le 100 |H(v) H(w)|    \} \r|_{\bR}\lea 1.
\end{equation}

\underline{ Proof of \eqref{eq:Euclidean measure est-1}. } 
We write 
$$|H(v,w)|^2\le 100 |H(v) H(w)| $$
as
$$ |(w_1+w_2)(v_1-v_2)+(w_1-w_2)(v_1+v_2) |\lea \sqrt{|(w_1+w_2)(v_1-v_2)(w_1-w_2)(v_1+v_2)|}  ,$$
thus demonstrating that 
$$ |(w_1+w_2)(v_1-v_2)|\sim |(w_1-w_2)(v_1+v_2)|   $$
holds.

Define $c=v_1+v_2$ and $ d=v_1-v_2$. Since $H(v)\ne 0$, we have  $cd\ne 0$. After performing the change of variables $\alpha=\frac{w_1+w_2}{c}, \beta=\frac{w_1-w_2}{d}$, \eqref{eq:Euclidean measure est-1} reduces to
\begin{equation}\label{eq:Euclidean measure est-1-1}
 \l|\{(\alpha,\beta)\in \bR^2: |\alpha\beta-1|\le |cd|^{-1}, |\alpha|\sim |\beta|   \}          \r|_{\bR^2} \lea |cd|^{-1}. 
\end{equation}

{\bf Case 1)}: $|cd|\lea 1$.  Then we can deduce that $ 
|\alpha|, |\beta|\lea |cd|^{-\frac12}$ by $|\alpha|\sim |\beta|$ and 
$$|\alpha \beta|\le 1+|\alpha\beta-1|\le |cd|^{-1}.$$ Thus \eqref{eq:Euclidean measure est-1-1} holds.

{\bf Case 2)}: $|cd|\gg 1$. The diagram of the region corresponding to Case 2 can be found in Figure 2.
We can deduce that
$|\alpha|, |\beta|\sim 1$ by $|\alpha|\sim |\beta|$ and 
$$|\alpha\beta-1|\le |cd|^{-1} \ll 1.$$
So
\begin{align*}
&\l|\{(\alpha,\beta)\in \bR^2: |\alpha\beta-1|\le |cd|^{-1}, |\alpha|\sim |\beta|   \}          \r|_{\bR^2}  \\  
\lea & \int_{|\alpha|\sim 1}  |cd|^{-1} |\alpha|^{-1} {\rm d}\alpha 
\sim |cd|^{-1}.
\end{align*}
We have proved \eqref{eq:Euclidean measure est-1-1}.

\underline{ Proof of \eqref{eq:line segment est-1}. } It holds
\begin{align*}
&\sup_{w_2\in \bZ}\l| \{w_1\in \bR:  |H(v)-H(w)|\le 1,  |H(v,w)|^2\le 100 |H(v) H(w)|    \} \r|_{\bR} \\
\lea & \sup_{C_0\in \bR}\l| \{w_1\in \bR:  |w_1^2-C_0|\le 1   \} \r|_{\bR} \lea 1.
\end{align*}
\end{proof}

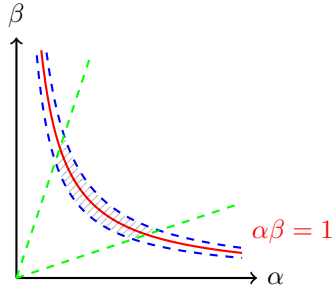
\begin{figure}[ht]
\centering
\begin{tikzpicture}[scale=1]
  
  \draw[->, thick] (0,0) -- (3.2,0) node[right] {$\alpha$};
  \draw[->, thick] (0,0) -- (0,3.2) node[above] {$\beta$};
  
  \draw[red, thick, domain=0.33:3, samples=100] plot (\x, {1/\x}) node[above right] {$\alpha\beta=1$};
  
  \draw[blue, dashed, thick, domain=0.27:3, samples=100] plot (\x, {0.8/\x});
  \draw[blue, dashed, thick, domain=0.4:3, samples=100] plot (\x, {1.2/\x});
  
  \draw[green, dashed, thick] (0,0) -- (3,1) node[right] {};
  \draw[green, dashed, thick] (0,0) -- (1,3) node[above right] {};
  
  \fill[pattern=north east lines, pattern color=black, opacity=0.5] 
    plot[domain=0.516:1.549, samples=100] (\x, {0.8/\x})
    -- plot[domain=1.549:1.897, samples=100] (\x, {\x/3})
    -- plot[domain=1.897:0.632, samples=100] (\x, {1.2/\x})
    -- plot[domain=0.632:0.516, samples=100] (\x, {3*\x})
    -- cycle;
  
\end{tikzpicture}
\caption{Intersection of regions $|\alpha\beta-1|\le |cd|^{-1}$ and $|\alpha|\sim |\beta|$.}
\end{figure}

\begin{proof}[Proof of Proposition \ref{prop:A2}]
 Fix $v\in \bR\times \bZ$ with $H(v)\ne 0$. Applying Lemma \ref{lem:measure control for RZ}, it suffices to prove the Euclidean measure estimate
\begin{equation} \label{eq:Euclidean measure est-2}
\l| \{u\in \bR^2:  |H(u+v)-H(u-v)|\le 2,  |H(u+v,u-v)|^2> 100 |H(u+v) H(u-v)|    \} \r|_{\bR^2}\lea 1
\end{equation}
and the estimate for the length of the longest line segment
\begin{align}\label{eq:line segment est-2}
\sup_{u_2\in \bZ}\Big|  &\{u_1\in \bR:  |H(u+v)-H(u-v)|\le 2,  |H(u+v,u-v)|^2> 100 |H(u+v) H(u-v)|     \}  \\
& \bigcap \{u_1\in \bR: u_1+v_1\ge  u_2+v_2\ge 0, u_1-v_1\ge  u_2-v_2\ge 0     \}
\Big|_{\bR}\lea 1.
\end{align}

\underline{ Proof of \eqref{eq:Euclidean measure est-2}. } 
Similar to the discussion in the proof of \eqref{eq:Euclidean measure est-1}, it follows from 
$$ |H(u+v,u-v)|^2> 100 |H(u+v) H(u-v)|    $$
that we may assume, without loss of generality, that 
\begin{equation}\label{eq:equivalent condition}
|(u_1+u_2+v_1+v_2)(u_1-u_2-v_1+v_2)| \gg |(u_1+u_2-v_1-v_2)(u_1-u_2+v_1-v_2)|     
\end{equation}
holds.

Define $c=v_1+v_2$ and $ d=v_1-v_2$. Since $H(v)\ne 0$, we have  $cd\ne 0$. After performing the change of variables $\alpha=\frac{u_1+u_2}{c}, \beta=\frac{u_1-u_2}{d}$, \eqref{eq:Euclidean measure est-2} reduces to
\begin{equation}\label{eq:Euclidean measure est-2-1}
 \l|\{(\alpha,\beta)\in \bR^2: |\alpha+\beta|\lea |cd|^{-1}, |(\alpha+1)(\beta-1)|\gg |(\alpha-1)(\beta+1)|   \}          \r|_{\bR^2} \lea |cd|^{-1}. 
\end{equation}
When $|(\alpha+1)(\beta-1)|\gg |(\alpha-1)(\beta+1)|$ holds, $|\alpha|\gg 1$ and $|\beta|\gg 1$ cannot both be true. Furthermore, it can be shown that when either $\alpha$ or $\beta$ is fixed, the length of the interval in which the other variable takes values is $O(|cd|^{-1})$. Combining the two facts above, we establish \eqref{eq:Euclidean measure est-2-1}.

\underline{ Proof of \eqref{eq:line segment est-2}. } Fix $u_2\in \bZ$. The condition 
$$ u_1+v_1\ge  u_2+v_2\ge 0, u_1-v_1\ge  u_2-v_2\ge 0  $$
implies 
\begin{equation}\label{eq:line segment est-2-1}
u_1-  u_2\ge |v_1-v_2|, \quad u_2\ge |v_2|.
\end{equation}
As in the analysis of \eqref{eq:Euclidean measure est-2-1}, we deduce from condition
$$ |H(u+v,u-v)|^2> 100 |H(u+v) H(u-v)|    $$
 that either $|\alpha|\lea 1$ or $|\beta|\lea 1$ must hold, where $\alpha=\frac{u_1+u_2}{v_1+v_2}$ and $\beta=\frac{u_1-u_2}{v_1-v_2}$. We may assume without loss of generality that 
 \begin{equation}\label{eq:line segment est-2-2}
 |u_1+u_2|\lea |v_1+v_2|
 \end{equation}
 holds. A calculation shows that condition 
 $$|H(u+v)-H(u-v)|\le 2 $$
 is equivalent to 
 \begin{equation}\label{eq:line segment est-2-3}
  |u_1v_1-u_2v_2|\lea 1.
 \end{equation}
 
 {\bf Case 1)}: $|v_1|\gtrsim 1$. Hence, estimate \eqref{eq:line segment est-2} follows from \eqref{eq:line segment est-2-3}.
 
 {\bf Case 2)}: $|v_1|\ll 1$ and $|v_2|\lea 1$. We can deduce that \eqref{eq:line segment est-2} holds by \eqref{eq:line segment est-2-2}.
 
 {\bf Case 3)}: $|v_1|\ll 1$ and $|v_2|\gg 1$. From \eqref{eq:line segment est-2-1}, we see that $|u_2|\ge |v_2|\gg 1$ holds. It then follows easily from \eqref{eq:line segment est-2-2} that 
 $$|u_1|\lea |u_2|+|v_1+v_2|\lea |u_2|.$$
 Accordingly, in the case analysis we can ensure that $$\l|\frac{u_1v_1}{u_2 v_2}\r| \ll 1$$ holds by choosing the implicit constant appropriately. Thus it can be concluded that the set involved in \eqref{eq:line segment est-2} is empty, since the following contradiction arises: 
 $$ 1 \gtrsim |u_1v_1-u_2v_2|=|u_2v_2| \l|1- \frac{u_1v_1}{u_2 v_2}  \r| \gtrsim |u_2v_2| \gg 1 . $$
\end{proof}

\begin{remark}
In fact, following the proof of Theorem \ref{thm:hyperbolic on RT}, we can also prove the following sharp estimate, improving Theorem 1.8 of \cite{BSTW25}:
$$ \|e^{it \partial_{x_1} \partial_{x_2}}  \phi     \|_{L_{t,x}^4([0,1]\times \bR\times \bT)} \lea  \|\phi\|_{L^2},   $$
where the initial data $\phi(x_1, x_2)$ is mean-zero in $x_2$.  This can be achieved by adjusting the choices of 
$A_1$ and $A_2$ in the proof of Theorem \ref{thm:hyperbolic on RT} under a suitable change of variables. The Euclidean measure estimates are equivalent to \eqref{eq:Euclidean measure est-1} and \eqref{eq:Euclidean measure est-2} under this variable transformation, while the length of the longest line segment in this setting can be readily estimated.

\end{remark}

\section{Well-posedness in $L^2$-critical space: Proof of Theorem \ref{mainthm: 2}}\label{4}
In this section, we establish the well-posedness theory for \eqref{eq: main} by giving the proof of Theorem \ref{mainthm: 2}. The well-posedness theory follows by standard arguments once the sharp $L^{4}$ Strichartz estimate is established; therefore, we only sketch the proof. See Takaoka-Tzvetkov \cite{takaoka20012d} for more details.

First, using the $T$-$T^{\ast}$ argument and Christ-Kiselev lemma, it is standard to obtain the inhomogeneous version of the Strichartz estimate as follows.

\begin{equation}
   \big\| \int_0^t H(t-s) f(s)ds \big\|_{L_{t,x}^{4}} \lesssim \|f\|_{L_{t,x}^{\frac{4}{3}}}.
\end{equation}

Next, define a sequence $u_n$ as follows
\begin{equation}
    u_0=H(t)\phi,
\end{equation}
\begin{equation}
    u_{n+1}=H(t)u_n+i\int_0^t H(t-s)\big(|u_n(s)|^2u_n(s)\big)ds.
\end{equation}
Take $I=[-1,1]$. Then we obtain
\begin{equation}
   \|u_{n+1}\|_{L_{t,x}^4(I)}\leq C(I)\|\phi\|_{L^2}+C(I) \|u_{n}\|^3_{L_{t,x}^{4}(I)}
\end{equation}
and
\begin{equation}
   \|u_{n+1}-u_n\|_{L_{t,x}^4(I)}\leq C(I) \|u_{n+1}-u_n\|_{L_{t,x}^4(I)}\times (\|u_{n+1}\|^2_{L_{t,x}^{4}(I)}+\|u_{n}\|^2_{L_{t,x}^{4}(I)}). 
\end{equation}
Hence taking $\|\phi\|_{L^2}$ small, we obtain that $u_n$ is a Cauchy sequence in a small ball close to the origin of $L_t^{\infty}L^2_{x}\cap L_{t,x}^4(I)$. Therefore $u_n$ converges to a local solution in the time interval $[-1,1]$. Due to the mass conservation law, an iteration of the last process giving local solutions extends the solutions globally in time.

\section{Sharp Bilinear Strichartz estimate: Proof of Theorem \ref{mainthm: 3}}\label{5}
In this section, building on Lemma \ref{lem:measure control for RZ}, we present a concise proof of the sharp bilinear estimate stated in Theorem \ref{mainthm: 3}. The proof parallels the argument for the elliptic case, with Lemma \ref{lem:measure control for RZ} replacing the number-theoretic counting argument.

\begin{proof}[Proof of Theorem \ref{mainthm: 3}]
By the standard argument, (see \cite{DPST} or \cite{takaoka20012d}), we only need to prove the measure estimate
$$  \sup_{|a|\sim N_1, |b|\sim N_2} |E_{a,b}|_{\bR\times \bZ_{1/\lambda}} \lea \frac{1}{\lambda}+\frac{N_2}{N_1},        $$
where
$$E_{a,b}= \l\{\eta \in \bR^2:  \l|H(\eta)+H(a+b-\eta)-H(a)-H(b)  \r|\lea 1, |\eta|\sim N_2   \r\}. $$
Fix $a,b$ with $|a|\sim N_1, |b|\sim N_2$.  Using Lemma \ref{lem:measure control for RZ}, it suffices to show
\begin{equation}\label{eq:Eab}
|E_{a,b}|_{\bR^2}\lea \frac{N_2}{N_1}.
\end{equation}
Assume $|a_1|\sim N_1$ without loss of generality. If $\eta=(\eta_1, \eta_2), \eta'= (\eta_1', \eta_2')\in E_{a,b} $ with $\eta_2=\eta_2'$, then
$$     \l|H(\eta)+H(a+b-\eta)-H(\eta')-H(a+b-\eta')  \r|\lea 1,                $$
which implies
$$ |(\eta_1-\eta_1')(a_1+b_1-\eta_1-\eta_1')    |\lea 1 .$$
Note that $N_1 \gg N_2$, there holds $ |a_1+b_1-\eta_1-\eta_1'|\sim N_1  $, and then
$ |\eta_1-\eta_1'| \lea \frac{1}{N_1}           $. 

The above calculation shows that when $(\eta_1, \eta_2)\in E_{a,b}$ and $\eta_2$ is fixed, the length of the existence interval for $\eta_1$ does not exceed $\frac{1}{N_1}$, which implies that
$$ |E_{a,b}|_{\bR^2}\lea \int_{|\eta_2|\lea N_2} \frac{1}{N_1} {\rm d}\eta_2 \lea \frac{N_2}{N_1}.    $$
Thus \eqref{eq:Eab} holds. The proof of Theorem \ref{mainthm: 3} is now complete.
 
\end{proof}

\section{Summary and Open problems}\label{6}

Finally, we make a summary and comment on some related open problems.

The current paper fills a gap in the theory of sharp Strichartz estimates for Schr\"odinger equations, both elliptic and hyperbolic for Euclidean spaces, tori, and waveguides, as summarized below.
\begin{table}[htbp]\label{F1}
\centering
\renewcommand{\arraystretch}{1.3}
\begin{tabular}{|c|c|c|c|}
\hline
\textbf{Geometry} & \textbf{Operator} & \textbf{Sharp $L^4$ Strichartz} & \textbf{Reference} \\
\hline
$\mathbb{R}^2$ & $-\Delta$ (elliptic) & no loss & Strichartz \cite{Strichartz1977}  \\
\hline
$\mathbb{R}^2$ & $\square=\partial_x^2-\partial_y^2$ (hyperbolic) & no loss & Strichartz \cite{Strichartz1977} \\
\hline
$\mathbb{T}^2$ & $-\Delta$ (elliptic) & $\log^{\frac{1}{4}}(N)$-loss & Herr-Kwak \cite{Herr}\\
\hline
$\mathbb{T}^2$ & $\square=\partial_x^2-\partial_y^2$ (hyperbolic) & $N^{\frac{1}{4}}$-loss & Wang \cite{wang2013periodic} \\
\hline
$\mathbb{R}\times\mathbb{T}$ & $-\Delta$ (elliptic) & no loss & Takaoka--Tzvetkov \cite{takaoka20012d} \\
\hline
$\mathbb{R}\times\mathbb{T}$ & $\square=\partial_x^2-\partial_y^2$ (hyperbolic) & no loss & \textbf{This work} \\
\hline
\end{tabular}
\caption{Sharp Strichartz estimates for Schr\"odinger equations in canonical two-dimensional geometries.}
\end{table}

As mentioned in the introduction, a central challenge may be global well-posedness for the cubic hyperbolic NLS on $\mathbb{R}\times\mathbb{T}$ with large $L^2$ data. Indeed, global well-posedness for large $H^{s}$ data is also non-trivial, due to the lack of a coercive conservation law, stemming from the hyperbolic signature.

 Extending the present results to higher-dimensional waveguides such as $\mathbb{R}^m\times\mathbb{T}^n$ also poses intriguing challenges: determining the sharp Strichartz exponent in these geometries would shed light on how the interplay between Euclidean and compact components affects dispersion. The methods in the current article also cannot be directly applied in the fractional case.

Another natural direction is to investigate multilinear refinements and potential applications to the Davey–Stewartson system (and other models), where hyperbolic dispersion also plays a prominent role.

\subsection*{Data Availability Statement.} Data sharing not applicable to this article as no datasets were generated or analysed during the current study.

 \subsection*{Conflict of Interest} The authors declare no competing interests.

\bibliographystyle{abbrv}

\bibliography{ref}

\begin{thebibliography}{10}

\bibitem{Barron}
A.~Barron.
\newblock On global-in-time {S}trichartz estimates for the semiperiodic {S}chr\"{o}dinger equation.
\newblock {\em Anal. PDE}, 14(4):1125--1152, 2021.

\bibitem{BCP21}
A.~Barron, M.~Christ, and B.~Pausader.
\newblock Global endpoint {S}trichartz estimates for {S}chr\"odinger equations on the cylinder {$\Bbb R\times \Bbb T$}.
\newblock {\em Nonlinear Anal.}, 206:Paper No. 112172, 7, 2021.

\bibitem{bacsakouglu2024local}
E.~Ba{\c{s}}ako{\u{g}}lu, N.~Tzvetkov, C.~Sun, and Y.~Wang.
\newblock Local well-posedness for the periodic boltzmann equation with constant collision kernel.
\newblock {\em arXiv preprint arXiv:2411.12140}, 2024.

\bibitem{Basu2024}
S.~Basu, S.~Guo, R.~Zhang, and P.~Zorin-Kranich.
\newblock A stationary set method for estimating oscillatory integrals.
\newblock {\em Journal of the European Mathematical Society}, 2024.

\bibitem{BSTW25}
E.~Başakoğlu, C.~Sun, N.~Tzvetkov, and Y.~Wang.
\newblock Hyperbolic nonlinear {S}chr{\"o}dinger equations on $\mathbb{R}\times \mathbb{T}$, 2025.

\bibitem{Bourgain1}
J.~Bourgain.
\newblock Fourier transform restriction phenomena for certain lattice subsets and applications to nonlinear evolution equations. {I}. {S}chr\"{o}dinger equations.
\newblock {\em Geom. Funct. Anal.}, 3(2):107--156, 1993.

\bibitem{BD}
J.~Bourgain and C.~Demeter.
\newblock The proof of the {$l^2$} decoupling conjecture.
\newblock {\em Ann. of Math. (2)}, 182(1):351--389, 2015.

\bibitem{cazenave2003semilinear}
T.~Cazenave.
\newblock {\em Semilinear {S}chr\"{o}dinger equations}, volume~10 of {\em Courant Lecture Notes in Mathematics}.
\newblock New York University, Courant Institute of Mathematical Sciences, New York; American Mathematical Society, Providence, RI, 2003.

\bibitem{davey1974three}
A.~Davey and K.~Stewartson.
\newblock On three-dimensional packets of surface waves.
\newblock {\em Proceedings of the Royal Society of London. A. Mathematical and Physical Sciences}, 338(1613):101--110, 1974.

\bibitem{DPST}
D.~De~Silva, N.~Pavlovi\'{c}, G.~Staffilani, and N.~Tzirakis.
\newblock Global well-posedness for a periodic nonlinear {S}chr\"{o}dinger equation in 1{D} and 2{D}.
\newblock {\em Discrete Contin. Dyn. Syst.}, 19(1):37--65, 2007.

\bibitem{demeter2025restriction}
C.~Demeter and S.~Wu.
\newblock Restriction and decoupling estimates for the hyperbolic paraboloid in $\mathbb{R}^3$.
\newblock {\em arXiv preprint arXiv:2505.09037}, 2025.

\bibitem{MR4782142}
Y.~Deng, C.~Fan, K.~Yang, Z.~Zhao, and J.~Zheng.
\newblock On bilinear {S}trichartz estimates on waveguides with applications.
\newblock {\em J. Funct. Anal.}, 287(9):Paper No. 110595, 2024.

\bibitem{DWWZ2025}
Y.~Deng, H.~Wang, Y.~Wang, and Z.~Zhao.
\newblock On restricted-type strichartz estimates and the applications, 2025.

\bibitem{DZZ}
Y.~Deng, Y.~Zhang, and Z.~Zhao.
\newblock Sharp bilinear eigenfunction estimate, anisotropic {S}trichartz estimate, and energy-critical {NLS}.
\newblock {\em arXiv preprint arXiv:2509.09565}, 2025.

\bibitem{BD3}
B.~Dodson.
\newblock Global well-posedness and scattering for the defocusing, {$L^{2}$}-critical nonlinear {S}chr\"{o}dinger equation when {$d\geq3$}.
\newblock {\em J. Amer. Math. Soc.}, 25(2):429--463, 2012.

\bibitem{Herr}
S.~Herr and B.~Kwak.
\newblock Strichartz estimates and global well-posedness of the cubic {NLS} on $\mathbb{T}^2$.
\newblock {\em Forum Math. Pi}, 12(e14), 2024.

\bibitem{Herrnew}
S.~Herr, R.~Schippa, and N.~Tzvetkov.
\newblock Global results for weakly dispersive {KP-II} equations on the cylinder.
\newblock {\em arXiv preprint arXiv: 2504.09333}, 2025.

\bibitem{keel1998endpoint}
M.~Keel and T.~Tao.
\newblock {E}ndpoint {S}trichartz estimates.
\newblock {\em Amer. J. Math.}, 120(5):955--980, 1998.

\bibitem{liu2025}
B.~Liu and X.~Zheng.
\newblock On sharp {S}trichartz estimate for hyperbolic {S}chr{\"o}dinger equation on $\mathbb{T}^3$, 2025.

\bibitem{rosenzweig2018global}
M.~Rosenzweig.
\newblock Global well-posedness and scattering for the elliptic-elliptic {D}avey-{S}tewartson system at {$L^{2}$} -critical regularity.
\newblock {\em arXiv preprint arXiv:1808.01955}, 2018.

\bibitem{saut2024hyperbolic}
J.-C. Saut and Y.~Wang.
\newblock On the hyperbolic nonlinear {S}chr{\"o}dinger equations.
\newblock {\em Advances in Continuous and Discrete Models}, 2024(1):15, 2024.

\bibitem{Strichartz1977}
R.~S. Strichartz.
\newblock Restrictions of {F}ourier transforms to quadratic surfaces and decay of solutions of wave equations.
\newblock {\em Duke Math. J.}, 44(3):705--714, 1977.

\bibitem{takaoka20012d}
H.~Takaoka and N.~Tzvetkov.
\newblock On 2{D} nonlinear {S}chr{\"o}dinger equations with data on $\mathbb{R} \times \mathbb{T}$.
\newblock {\em J. Funct. Anal.}, 182(2):427--442, 2001.

\bibitem{wang2013periodic}
Y.~Wang.
\newblock Periodic cubic hyperbolic {S}chr{\"o}dinger equation on $\mathbb{T}^2$.
\newblock {\em J. Funct. Anal.}, 265(3):424--434, 2013.

\end{thebibliography}

\end{document}